\numberwithin{equation}{section}
\newtheorem{thm}{Theorem}[section]
\newtheorem{conj}[thm]{Conjecture}
\newtheorem{lem}[thm]{Lemma}
\newtheorem{defn}[thm]{Definition}
\newtheorem{example}{Example}[section]
\begin{document}

\title[Graphs with no immersion of large complete graphs]{Constructing graphs with no immersion of large complete graphs}
\date{\today}

\author[K. L. Collins]{Karen L. Collins}
\address{Department of Mathematics and Computer Science\\
Wesleyan University\\
Science Tower 655\\
265 Church Street\\
Middletown, CT 06459}
\email[K. L. Collins]{kcollins@wesleyan.edu}
\urladdr[]{http://kcollins.web.wesleyan.edu/}
\author[M. E. Heenehan]{Megan E. Heenehan}
\email[M. E. Heenehan]{mheenehan@wesleyan.edu}
\urladdr[]{http://mheenehan.web.wesleyan.edu/}

\keywords{Graph immersion, minimum degree, chromatic number, edge-connectivity, Haj\'os Conjecture, Hadwiger Conjecture, graph minor, subdivision}
\subjclass[2000]{Primary: 05C15; Secondary: 05C40, 05C07.}

\begin{abstract}
In 1989, Lescure and Meyniel proved, for $d=5, 6$, that every $d$-chromatic graph contains an immersion of $K_d$, and in 2003 Abu-Khzam and Langston conjectured that this holds for all $d$. In 2010, DeVos, Kawarabayashi, Mohar, and Okamura proved this conjecture for $d = 7$.
In each proof, the $d$-chromatic assumption was not fully utilized, as the proofs only use the fact that a $d$-critical graph has minimum degree at least $d - 1$.  DeVos, Dvo\v{r}\'ak, Fox, McDonald, Mohar,  and Scheide show the stronger conjecture that a graph with minimum degree $d-1$ has an immersion of $K_d$ fails for $d=10$ and $d\geq 12$ with a finite number of examples for each value of $d$, and small chromatic number relative to $d$, but it is shown that a minimum degree of $200d$ does guarantee an immersion of $K_d$.

In this paper we show that the stronger conjecture is false for $d=8,9,11$ and give infinite families of examples with minimum degree $d-1$ and chromatic number $d-3$ or $d-2$ that do not contain an immersion of $K_d$.  Our examples can be up to $(d-2)$-edge-connected. We show, using Haj\'os' Construction, that there is an infinite class of non-$(d-1)$-colorable graphs that contain an immersion of $K_d$.  We conclude with some open questions, and the conjecture that a graph $G$ with minimum degree $d - 1$ and more than $\frac{|V(G)|}{1+m(d+1)}$ vertices of degree at least $md$ has an immersion of $K_d$.
\end{abstract}

\maketitle

\section{Introduction}\label{intro}

A classic question in graph theory is: if a graph has chromatic number $d$ does the graph ``contain'' a complete graph on $d$ vertices in some way? We know that the containment is not as a subgraph, just consider an odd cycle. In the 1940s Haj\'os~\cite{Ha} conjectured that the necessary containment was subdivision. A graph $H$ is a \textit{subdivision} of a graph $G$, if $G$ contains a subgraph which is isomorphic to a graph that can be obtained from $H$ by subdividing some of the edges. Haj\'os' Conjecture is true for $d \leq 4$ \cite{D}, however, Catlin~\cite{C} showed it is not true for $d \geq 7$ by giving a family of counterexamples. Haj\'os' Conjecture remains open for $d \in \{5, 6\}$.

Another property to consider in trying to answer this question is graph minor. A graph $H$ is a \textit{minor} of a graph $G$ if a graph isomorphic to $H$ can be obtained from a subgraph of $G$ by contracting edges. We will say $G$ has a minor of $H$. In 1943, Hadwiger~\cite{H} conjectured that every loopless $d$-chromatic graph has a minor of $K_d$. Haj\'os' Conjecture is true for $d \leq 4$, so Hadwiger's Conjecture is true for these values. Wagner~\cite{W} showed that the $d = 5$ case of Hadwiger's Conjecture is equivalent to the Four Color Theorem, and Robertson, Seymour, and Thomas \cite{RST} proved the case when $d = 6$; the cases $d \geq 7$ are still open.

Given the difficulties in proving (or disproving) Hadwiger's Conjecture, we choose to explore a different type of containment, namely, graph immersion. The concept of immersion was introduced by Nash-Williams \cite{NW}, as a weakening of graph subdivision, when he conjectured that for every countable sequence $G_i$ $(i = 1, 2, \ldots)$ of graphs, there exist $j > i \geq 1$ such that there is an immersion of $G_i$ in $G_j$. This conjecture was proved by Robertson and Seymour in \cite{RS}. The well-quasi-ordering of graphs by immersion has also been discussed in \cite{G, Ki, L}. Immersions in digraphs have been studied in \cite{CF} and \cite{DM}, and immersion in other contexts can be found in \cite{A, F} and \cite{GK}.
In this paper we will consider immersions of complete graphs.

All graphs in this paper are simple unless otherwise stated. We will follow the notation of West in \cite{We}.

\begin{defn} \cite{AL} A pair of adjacent edges $uv$ and $vw$, $w \neq u$, is \textbf{lifted} by deleting the edges $uv$ and $vw$ and adding the edge $uw$. \end{defn}

\begin{defn} \cite{AL} We say a graph $H$ is \textbf{immersed} in a graph $G$ if and only if a graph isomorphic to $H$ can be obtained from $G$ by lifting pairs of edges and taking a subgraph. If a graph $H$ is immersed in a graph $G$ we may say $G$ has an immersion of $H$.  \end{defn}

It follows from the definition that  if $H$ is immersed in $G$, then the degree of any vertex in $H$ is less than or equal to its degree in $G$, so in order for a $d$-chromatic graph, $G$, to have an immersion of $K_d$, $G$ must have at least $d$ vertices of degree at least $d - 1$.
In 2003, Abu-Khzam and Langston made the following appealing conjecture.

\begin{conj} \cite{AL} \label{main} The complete graph $K_d$ can be immersed in any $d$-chromatic graph.
\end{conj}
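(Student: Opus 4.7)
The plan is to attempt an induction on $d$, anchored on the known cases $d \leq 7$. Let $G$ be a $d$-chromatic graph and, by passing to a $d$-critical subgraph, assume that every proper subgraph of $G$ admits a proper $(d-1)$-coloring; in particular $\delta(G) \geq d-1$ and $G$ is $2$-connected. Because this paper shows that the stronger minimum-degree form of the conjecture fails for infinitely many values of $d$, any successful proof must exploit the chromatic number genuinely and not merely the degree bound — so the tempting shortcut ``find $d$ vertices of degree $\geq d-1$ and connect them'' is known to be doomed without extra input.

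The approach I would take is to mine the rich supply of Kempe chains arising from critical colorings. For each edge $e = xy$ of $G$, choose a proper $(d-1)$-coloring $c_e$ of $G - e$; criticality forces $c_e(x) = c_e(y)$, and for every pair of colors $(i,j)$ it forces an $x$--$y$ path $P_{ij}^e$ in the bichromatic subgraph on colors $\{i,j\}$. First I would use these Kempe paths, varying $e$ over a well-chosen spanning subset of $E(G)$, to designate a set $S \subseteq V(G)$ of $d$ high-degree ``terminals'' — one natural choice being a transversal of the color classes of $c_{e_0}$ for a fixed edge $e_0$. Second, for each pair $\{u,v\} \subseteq S$ I would produce a walk from $u$ to $v$ whose internal vertices lie in $V(G) \setminus S$, assembled from Kempe paths across several colorings; lifting each such walk yields one edge of the target $K_d$ on vertex set $S$.

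The main obstacle is the second step: producing the $\binom{d}{2}$ walks edge-disjointly. Even a single pair of edge-disjoint Kempe walks between prescribed terminals is delicate, and coordinating $\binom{d}{2}$ of them at once has been the bottleneck in every prior case; the $d=7$ argument of DeVos, Kawarabayashi, Mohar and Okamura required a substantial case analysis together with an edge-coloring ingredient. The counterexamples constructed in the remainder of this paper intensify the difficulty: at the threshold $\delta = d-1$ the necessary edge-disjoint structure is often absent, so the proof must identify some structural feature of $d$-critical graphs — beyond minimum degree and beyond edge-connectivity — that the counterexamples of Section~3 provably lack.

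A complementary strategy, which I would pursue in parallel, is to sharpen the degree hypothesis in the way suggested by our concluding conjecture: replace ``$\delta(G) \geq d-1$'' by ``$\delta(G) \geq d-1$ together with more than $|V(G)|/(1+m(d+1))$ vertices of degree at least $md$'' for some absolute $m$, and verify that every $d$-critical graph which is not $K_d$ itself satisfies such a density condition. If so, one could then follow the $200d$-style averaging argument of DeVos, Dvo\v{r}\'ak, Fox, McDonald, Mohar and Scheide on just the high-degree core, and reduce the chromatic conjecture to a purely extremal statement about the distribution of large degrees in critical graphs — which I expect to be the most tractable formulation of the problem.
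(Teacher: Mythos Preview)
The statement you are addressing is Conjecture~\ref{main}, and the paper contains no proof of it: the authors state explicitly that it ``remains open for $d \geq 8$'' and devote the paper to constructing evidence around it, not to proving it. So there is no ``paper's own proof'' to compare against, and your submission is not a proof either --- it is a research programme. You say as much yourself: the second step, producing $\binom{d}{2}$ edge-disjoint walks between the terminals, is identified as ``the main obstacle'' and left unresolved; the complementary strategy is likewise conditional on an unproved density property of $d$-critical graphs. A proof proposal that names its own bottleneck and then stops is, by definition, incomplete.

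To be concrete about why the Kempe-chain approach does not close the gap as written: for a single edge $e$ and a single coloring $c_e$ of $G-e$, the bichromatic paths $P_{ij}^e$ are pairwise edge-disjoint, but they all run between the \emph{same} two endpoints $x,y$, so they furnish many $x$--$y$ connections and nothing directly between other pairs of terminals in your transversal $S$. Varying $e$ gives you different colorings with no edge-disjointness guarantee across colorings, so ``assembling walks from Kempe paths across several colorings'' is exactly where the argument needs a new idea, not a citation to the $d=7$ case analysis. Your alternative route via the paper's closing conjecture is circular for present purposes: that conjecture is itself open, and there is no known reason why a $d$-critical graph should have the required density of high-degree vertices (indeed, $K_d$ itself has none). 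In short, the genuine gap is that neither branch of the plan contains a mechanism that distinguishes $d$-chromatic graphs from the minimum-degree-$(d-1)$ graphs of Section~\ref{dock}, and until such a mechanism is supplied the conjecture remains open.
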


Since any $d$-chromatic graph has a $d$-critical subgraph of minimum degree $d - 1$, a proof of the stronger statement that every graph with minimum degree $d - 1$ has an immersion of $K_d$ would imply Conjecture~\ref{main}. Lescure and Meyniel \cite{L}
 used this method to prove the conjecture for $d\leq 6$, and DeVos et al. \cite{De} gave a new proof for $d\leq 7$, but also cite an example of Paul Seymour that shows the stronger statement fails for $d=10$.  Thus, Conjecture \ref{main} remains open for $d\geq 8$.

Following \cite{DeD}, we let $f(d)$ be the smallest integer such that every graph of minimum degree at least $f(d)$ contains an immersion of $K_d$.
Again in \cite{DeD}, the authors show that $f(d)\leq 200d$ for all $d$, and that
$d\leq f(d)$ for $d=10$ and $d\geq 12$.  Their examples are similar to, but more general than, those given by Seymour and give a finite number of examples for each $d$. These examples do not cover the $d = 8, 9$, and $11$ cases, and have small chromatic number relative to $d$.

In this paper we show $f(d) \geq d$ for $d \geq 8 $ by giving infinite families of examples of graphs with minimum degree $d - 1$ and no immersion of $K_d$. Our examples have chromatic number $ d - 3$ or $d - 2$, showing the necessity of some chromatic number bound to get an immersion of $K_d$. Our examples can be up to $(d - 2)$-edge-connected. In addition, we prove that there is an infinite class of $d$-colorable graphs that contain immersions of $K_d$. Finally, we conjecture that if a graph $G$ has minimum degree $d - 1$ and more than $\frac{|V(G)|}{1 + m(d + 1)}$ vertices of degree at least $md$ for any positive integer $m$, then it has an immersion of $K_d$.

The inspiration for our examples comes from the proof of Theorem~\ref{DeVos} given in \cite{De}. We will begin by describing, in general, properties of graphs with minimum degree $d - 1$ and no immersion of $K_d$. We will do this by first considering a graph with an immersion of $K_d$, and exploring where the vertices of that immersion must live within the graph. This will be our Corner Separating Lemma, which appears in Section~\ref{CSL}.  In Section~\ref{dock}, we will describe a general construction of docks, bays, and pods that form graphs with minimum degree $d-1$ and no immersion of $K_d$.  In Section~\ref{examples} we construct appropriate pods, bays, and docks. In Section~\ref{compare}, we compare our examples to those in \cite{DeD}. We conclude with some open questions, a proof that there is an infinite class of examples that satisfy Conjecture~\ref{main}, and a conjecture of our own in Section \ref{Conc}.

\section{Corner Separating Lemma}\label{CSL}

We will begin with a lemma that is useful for determining, given an immersion, where the vertices of that immersion are located in the graph. For our purposes it will be useful to consider an equivalent definition of immersion.

\begin{defn} \cite{AL} A graph $H$ is \textbf{immersed} in a graph $G$ if and only if there exists an injection $\phi: V(H) \rightarrow V(G)$ that can be extended to an injection $\phi_E: E(H) \rightarrow \{\textrm{paths in G}\}$ such that if $u, v \in V(H)$ and $e = uv \in E(H)$ then $\phi_E(e)$ is a path between $\phi(u)$ and $\phi(v)$, and for all $e_1 \neq e_2$, $\phi_E(e_1)$ and $\phi_E(e_2)$ are edge disjoint. \end{defn}

\begin{defn} In an immersion we call image vertices under the injection \textbf{corner} vertices. We call vertices that are on these paths, that are not endpoints of the path, \textbf{pegs}.\end{defn}

We will argue that all corner vertices of an immersion of $K_d$ would have to be in one part of the graph.

\begin{lem}[Corner Separating Lemma]\label{lemma} Let $G$ be a graph and $M$ a subgraph such that there is a cutset of edges $C$ in $G$, $|C| \leq d - 2$, and $M$ is a connected component of $G - C$. If $G$ has an immersion of $K_d$, then all of the corner vertices are in $V(M)$ or all of the corner vertices are in $V(G - M)$.
\end{lem}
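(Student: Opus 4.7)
The plan is to argue by contradiction using a simple edge-counting argument based on the cutset $C$ and the edge-disjointness of the paths realizing the immersion.

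Suppose $G$ has an immersion of $K_d$, witnessed by an injection $\phi: V(K_d) \to V(G)$ with $d$ corner vertices $\phi(1), \ldots, \phi(d)$, and pairwise edge-disjoint paths $\phi_E(ij)$ joining $\phi(i)$ to $\phi(j)$ for each pair $i \neq j$. Suppose, toward a contradiction, that the conclusion fails, so that $k$ of these corner vertices lie in $V(M)$ and the remaining $d-k$ lie in $V(G)\setminus V(M)$, with $1 \leq k \leq d-1$. Since $M$ is a connected component of $G - C$, the cutset $C$ consists of precisely the edges of $G$ with one endpoint in $V(M)$ and the other in $V(G)\setminus V(M)$.

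For each of the $k(d-k)$ pairs of corners $(u,w)$ with $u \in V(M)$ and $w \in V(G)\setminus V(M)$, the path $\phi_E(uw)$ starts in $M$ and ends outside of $M$, so it must traverse at least one edge of $C$. Because the paths $\phi_E(\cdot)$ are pairwise edge-disjoint, the edges of $C$ used by the distinct $k(d-k)$ cross-pair paths are distinct, and therefore
\[
k(d-k) \;\leq\; |C| \;\leq\; d-2.
\]
The integer function $k \mapsto k(d-k)$ on $\{1, \ldots, d-1\}$ attains its minimum at the endpoints $k=1$ and $k=d-1$, where it equals $d-1$. Thus $k(d-k) \geq d-1 > d-2$, which contradicts the inequality above and completes the proof.

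There is no real obstacle here: the argument rests entirely on the two structural facts that (i) any path from $V(M)$ to $V(G)\setminus V(M)$ must cross $C$ and (ii) edge-disjointness of the immersion paths forces each such crossing to consume a distinct edge of $C$. The only minor point to keep track of is the reformulation of immersion in terms of the injection $\phi$ and edge-disjoint paths $\phi_E$, which is precisely the alternative definition introduced just before the lemma, so I would invoke that definition at the very start of the proof.
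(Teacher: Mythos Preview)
Your proof is correct and follows essentially the same approach as the paper's own proof: assume corners are split $k$ versus $d-k$, observe that the $k(d-k)$ edge-disjoint crossing paths force $k(d-k)\le |C|\le d-2$, and derive a contradiction since $k(d-k)\ge d-1$ on $1\le k\le d-1$. Your presentation is arguably a bit cleaner in directly evaluating the minimum of $k(d-k)$ at the endpoints, whereas the paper rewrites the inequality as $-x^2+dx+2-d\le 0$ and checks positivity on the interval, but the substance is identical.
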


\begin{proof}
Let $G$ and $M$ be as described in Lemma~\ref{lemma} and suppose $G$ has an immersion of $K_d$. Suppose for a contradiction that there are corner vertices in both $V(M)$ and $V(G - M)$. Then these corner vertices must be connected by edge disjoint paths, and any path from $V(M)$ to $V(G - M)$ uses an edge of $C$. There are $d$ corner vertices partitioned between $V(M)$ and $V(G - M)$. If there are $x$ corner vertices in $V(M)$, then there are $x(d - x)$ edge disjoint paths between corners in $M$ and corners in $G - M$, where $1 \leq x \leq d - 1$. So, we must have $x(d - x) \leq d - 2$. That is,
\begin{align*}
-x^2 + dx + 2 - d & \leq 0.
\end{align*}
One can check that $-x^2 + dx + 2 - d$ is positive on $1 \leq x \leq d - 1$ giving a contradiction. Therefore all of the corner vertices must be in $V(M)$ or all in $V(G - M)$.
\end{proof}

The following lemmas will also be useful.

\begin{lem}\cite{DeD} \label{edge}If $G$ has an immersion of $K_d$ on a set of $J$ corners, then $G$ has an immersion of $K_d$ on $J$ in which the edges between adjacent vertices in $J$ are used as the paths between these vertices.\end{lem}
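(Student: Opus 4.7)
The plan is to run an exchange argument on the family of all immersions of $K_d$ with corner set $J$. Among these, I would choose an immersion $\phi_E$ that maximizes the quantity
\[
N(\phi_E) = \#\bigl\{\{a,b\}\subseteq J : ab\in E(G) \text{ and } \phi_E(ab) \text{ is the single edge } ab\bigr\},
\]
and then argue that this maximizer already satisfies the conclusion: for every $u,v\in J$ with $uv\in E(G)$, $\phi_E(uv)$ is the single edge $uv$. Suppose for contradiction that some adjacent pair $u,v\in J$ has $\phi_E(uv)=P$ with $|E(P)|\geq 2$. The analysis splits into two cases according to whether the edge $uv$ is unused or used by the current immersion.

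The easy case is when $uv$ appears in no $\phi_E$-image: then replacing $\phi_E(uv)$ by the single edge $uv$ preserves edge-disjointness, yields a valid immersion, and increases $N$ by one, contradicting maximality. The substantive case is when $uv$ lies on some other path $\phi_E(xy)=Q$. Here I would first observe $\{u,v\}\neq\{x,y\}$, since $\phi_E$ is a function and $P\neq Q$. Next I split $Q$ at the edge $uv$ into two subpaths $Q_1$ from $x$ to $u$ and $Q_2$ from $v$ to $y$ (after possibly reversing an orientation), and form the $x$-to-$y$ walk $W=Q_1\cdot P\cdot Q_2$. Its edge set is $(E(Q)\setminus\{uv\})\cup E(P)$, which has no repetition because $P$ and $Q$ were edge-disjoint and neither contained $uv$ twice. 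From $W$ I would extract an arbitrary simple $x$-$y$ path $P'$, and then redefine
\[
\phi_E(uv) := uv, \qquad \phi_E(xy) := P',
\]
leaving every other pair-image untouched.

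To finish, I would verify the new map is still a $K_d$-immersion: the only images that changed are $\phi_E(uv)$ and $\phi_E(xy)$, and their edges lie in the disjoint union $\{uv\}\,\dot\cup\,E(P')$, which is contained in $\{uv\}\cup E(P)\cup(E(Q)\setminus\{uv\})$ and hence disjoint from all unchanged images. Under the new immersion, $\phi_E(uv)$ is a single edge. Moreover, the old $\phi_E(xy)=Q$ was \emph{not} a single edge, because it contained $uv$ and $\{u,v\}\neq\{x,y\}$, so we did not lose a single-edge pair-image in the swap. Hence $N$ strictly increases, contradicting maximality and completing the argument.

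The main potential obstacle is a subtle one: in the swap, the walk $W$ need not be a simple path, so I must ensure the extracted $P'$ is genuinely an $x$-$y$ path in $G$ and keep careful track of which edges reappear in which pair-image. Once this bookkeeping is done correctly, the strict gain in $N$ is immediate and the lemma follows.
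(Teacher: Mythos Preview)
Your exchange argument is correct. The key steps---choosing an immersion on $J$ that maximizes the number of direct-edge paths, then showing any non-direct adjacent pair can be rerouted without loss---are all sound, and you handle the delicate bookkeeping (edge-disjointness after the swap, the fact that the displaced path $Q$ was not itself a single edge, extracting a simple path from the walk $W$) carefully enough.

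There is nothing to compare against in this paper: the authors do not prove Lemma~\ref{edge} themselves but simply import it from \cite{DeD}. Your extremal/exchange argument is the standard way to establish such a statement and is almost certainly what the cited source does as well.
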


This lemma tells us that if there is an immersion of $K_d$, then there is an immersion of $K_d$ that uses edges between adjacent corners.

\begin{lem}\label{NoPeg} Suppose $G$ has an immersion of $K_d$. If a corner vertex of the immersion has degree at most $d$, then it cannot also be used as a peg in the immersion.\end{lem}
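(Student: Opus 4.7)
The plan is to carry out an edge-counting argument at the corner vertex $v$ in question. Since $v$ is a corner of an immersion of $K_d$, it is the endpoint of $d-1$ pairwise edge-disjoint paths, one to each of the other $d-1$ corners. Each of these paths contributes at least one edge of $G$ incident to $v$ (the first edge of the path leaving $v$), and by edge-disjointness these $d-1$ edges at $v$ are all distinct. So out of the at most $d$ edges incident to $v$, at least $d-1$ are already committed to the paths that start at $v$.

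Next I would suppose for contradiction that $v$ is also a peg on some path $P$ in the immersion joining two corners $a, b$ distinct from $v$. Because $v$ is an internal vertex of $P$, the path $P$ enters $v$ along one edge and leaves along another, using two edges of $G$ incident to $v$. Since $P$ is edge-disjoint from every path that starts or ends at $v$, these two edges are distinct from the $d-1$ edges counted in the previous step. This forces $v$ to have degree at least $d+1$ in $G$, contradicting the hypothesis that $\deg_G(v)\le d$.

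I do not expect any real obstacle here: the only subtlety is to be careful that ``at least one edge incident to $v$'' per incident path is correct even in the degenerate case where a path has length $1$ (in which case that single edge is still incident to $v$), and that the two edges contributed by a peg traversal are genuinely disjoint from the first-edges of the $d-1$ paths starting at $v$. Both facts are immediate from the edge-disjointness built into the definition of immersion, so the lemma follows directly from the degree count above. Lemma~\ref{edge} is not needed for this argument; the bound holds for any immersion, whether or not edges between adjacent corners are used as their joining paths.
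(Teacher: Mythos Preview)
Your proof is correct and follows essentially the same edge-counting argument as the paper: the $d-1$ edge-disjoint paths emanating from the corner $v$ consume $d-1$ distinct edges at $v$, leaving at most one edge, which is not enough for the two edges a peg traversal would require. Your write-up is slightly more explicit about why the first edges of the $d-1$ paths are distinct and why Lemma~\ref{edge} is unnecessary, but the approach is the same.
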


\begin{proof} Let $G$ be a graph that has an immersion of $K_d$ and let $v$ be a corner vertex in this immersion with degree at most $d$. Since $v$ is a corner in an immersed $K_d$ the degree of $v$ is either $d - 1\textrm{ or } d$. There are $d$ corners in the immersion so the immersion uses $d - 1$ of the edges incident with $v$ as the paths between $v$ and the other corner vertices. Then there is at most one more edge incident with $v$, so $v$ cannot be a peg.
\end{proof}

The Corner Separating Lemma tells us that if $G$ has an immersion of $K_d$, then all the corners must be in a maximally $(d - 1)$-edge-connected subgraph $G$. In the next section, we will construct graphs so that there can be no immersion of $K_d$ with all of the corners in such a subgraph. 

\section{Docks, Bays, and Pods}\label{dock}

In general we would like to be able to construct graphs with minimum degree $d - 1$ and no immersion of $K_d$. Our construction will be formed by docks, bays, and pods defined below. We will see that as long as we can find docks, bays, and pods with the desired characteristics we can create graphs of minimum degree $d - 1$ with no immersion of $K_d$. The hardest part will be to find pods that satisfy the definition. In \cite{De} the authors prove that for $d \leq 7$ a graph with minimum degree $d$ has an immersion of $K_d$. So we will see it is impossible to find pods for $d \leq 7$.

In each of the following definitions let $d$ be a positive integer.

\begin{defn} A \textbf{$d$-pod} is a graph in which every vertex has degree at least $d - 2$ and no more than $d - 2$ vertices have degree exactly $d - 2$. In addition, there is no immersion of $K_d$ in a $d$-pod, even if a maximum matching of the vertices of degree $d - 2$ is added.\end{defn}

Note that, adding a matching between the vertices of degree $d - 2$ may create multiple edges. When considering an immersion in a larger graph this matching in the pod will represent paths that can be lifted outside the pod to create more connections between vertices in the pod.

\begin{defn} A \textbf{$d$-bay} is a graph with at most $d - 2$ vertices. \end{defn}

\begin{defn} A \textbf{$d$-dock} is composed of one or more $d$-bays arranged in a circle with no more than $d - 3$ edges between any two consecutive $d$-bays, and no edges between nonconsecutive $d$-bays.\end{defn}

Note that, a $d$-dock with two bays can have $2(d - 3)$ edges between the bays.

\begin{defn}A $d$-pod is \textbf{added} to a $d$-bay by adding an edge from each vertex of degree $d - 2$ in the $d$-pod to a vertex in the $d$-bay.  \end{defn}

\begin{defn} A $d$-bay in a $d$-dock is \textbf{full} if $d$-pods are added in such a way that every vertex in the $d$-bay has degree at least $d - 1$. \end{defn}

Note that, a $d$-pod is connected to exactly one bay. When there is no confusion we will drop the $d$ prefix in the terms.

\begin{thm}\label{ours} Let $G_d$ be a $d$-dock in which every $d$-bay is full. Then $G_d$ has minimum degree $d - 1$ and no immersion of $K_d$. \end{thm}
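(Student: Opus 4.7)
The plan is to verify the minimum degree bound by direct inspection and then, to rule out an immersion of $K_d$, apply the Corner Separating Lemma to each pod and carry out a counting argument on the inter-bay cuts of the dock. For minimum degree: every bay vertex has degree at least $d-1$ by fullness, and every pod vertex has pod-degree at least $d-2$, with those of pod-degree exactly $d-2$ each picking up an attachment edge to their bay, so $\delta(G_d)\geq d-1$.

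Suppose, for contradiction, that $K_d$ is immersed in $G_d$ with corner set $J$. For each pod $P$, its matching of attachment edges to the bay is an edge cutset of size at most $d-2$ separating $V(P)$ from the rest of $G_d$, so Lemma~\ref{lemma} forces $J\subseteq V(P)$ or $J\cap V(P)=\emptyset$. Across all pods this leaves two cases: (Case~1) some single pod $P$ contains every corner, or (Case~2) no corner lies in any pod.

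\textbf{Case 1.} I plan to lift each ``excursion'' of an immersion path out of $P$ to a single edge internal to $P$. Any such excursion must exit and re-enter $P$ through matching edges at two distinct degree-$(d-2)$ vertices of $P$, since each of these vertices owns only one matching edge to the bay; edge-disjointness of the immersion paths then prevents any such vertex from lying in two excursions. Lifting every excursion yields a matching on the degree-$(d-2)$ vertices of $P$ together with an immersion of $K_d$ living entirely inside $P$, contradicting the defining property of a $d$-pod. Verifying that the lifts really do assemble into a matching, rather than overloading a single vertex with several lifted edges, is the most delicate bookkeeping step.

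\textbf{Case 2.} Write $x_i$ for the number of corners in bay $B_i$, so $\sum_i x_i = d$. If the dock has a single bay, its $\leq d-2$ vertices already cannot host $d$ corners. Otherwise fix any bay $B_i$: since any path entering a pod attached to $B_i$ must leave through another matching edge back to $B_i$, the cut separating $V(B_i)$ together with its pods from the rest of $G_d$ consists only of the inter-bay edges at $B_i$'s two cyclic neighbors, at most $2(d-3)$ edges in total. Counting the $x_i(d-x_i)$ required edge-disjoint paths between corners inside and outside this region gives $x_i(d-x_i)\leq 2(d-3)$, which together with $x_i\leq d-2$ forces $x_i\in\{0,1\}$. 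Hence at least $d$ bays each contain exactly one corner; picking two corners cyclically consecutive around the dock and cutting the dock at the two inter-bay positions bounding the arc between them uses at most $2(d-3)$ edges, but must carry $2(d-2)=2d-4$ edge-disjoint paths, the final contradiction.
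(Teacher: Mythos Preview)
Your proof is correct and follows essentially the same approach as the paper's own argument: verify the degree condition directly, use the Corner Separating Lemma on each pod to localize the corners either to a single pod or to the dock, eliminate the pod case by observing that external excursions lift to a matching on the degree-$(d-2)$ vertices (which the pod definition is designed to absorb), and then kill the dock case by the inter-bay cut count $x_i(d-x_i)\le 2(d-3)$ followed by the two-corner arc argument. Your write-up is, if anything, slightly more careful than the paper's in two places---you make explicit why the pods hanging off $B_i$ do not enlarge the relevant cut, and you spell out why the lifted excursions really do form a matching rather than overloading a vertex---but the skeleton is the same.
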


\begin{proof}
Let $G_d$ be a dock in which every bay is full. Let $v \in V(G_d)$. Then $v$ is in a pod or a bay. If $v$ is in a pod, then by the definition of a pod it has degree greater than or equal to $d - 2$ within the pod. If $v$ has degree $d - 2$ within the pod, then by the definition of adding a pod to a bay, there is an edge from $v$ to a bay, so $v$ has degree $d - 1$ in $G_d$. Thus, if $v$ is in a pod it has degree at least $d - 1$. If $v$ is in a bay, then by the definition of a bay being full it has degree at least $d - 1$. Therefore, $G_d$ has minimum degree $d - 1$.

Suppose $G_d$ has an immersion of $K_d$. Then by multiple applications of the Corner Separating Lemma all of the corner vertices are either in a single pod or in the dock. We can use the Corner Separating Lemma because there are at most $d - 2$ edges between any pod and the dock and there are no edges between pods.

The corners cannot all be in a pod because by definition pods have no immersion of $K_d$. The edges out of a pod may provide paths that can be lifted to give a maximum matching of vertices of degree $d - 2$ in the pod, but by definition of a pod even this is not enough to have an immersion of $K_d$ with all of the corners in the pod. Thus all of the corners must be in the dock.

If the dock has fewer than $d$ vertices than we are done, so assume the dock as at least $d$ vertices. If all of the corners are in the dock, then notice they cannot all be in a single bay because the bays have at most $d - 2$ vertices. Thus, the $d$ corners are split between at least two bays. We must consider two cases:
\begin{enumerate}
\item there is a bay with $k$ corners where $2 \leq k \leq d - 2$, or
\item there is at most one corner per bay.
\end{enumerate}

In Case 1, let $B$ be a bay with $k$ corners where $2 \leq k \leq d - 2$. There are at most $2(d - 3)$ edges from $B$ to neighboring bays. So, to get edge disjoint paths from the $k$ corners to the remaining corners in the dock we would need
\begin{align*}
k(d - k) & \leq 2(d - 3)\\
-k^2 + dk - 2d + 6 & \leq 0.
\end{align*}
One can check that, the function $-k^2 + dk - 2d + 6$ is positive on the entire interval we are considering giving a contradiction.
Therefore, there are not enough edge disjoint paths, and hence no immersion of $K_d$ when there are $k$ corners, $2 \leq k \leq d - 2$, in a bay.

In Case 2, let $B_1$ be a bay with one corner. Let $B_n$ be the next bay in the clockwise direction containing a corner. Let $H$ be the subgraph of $G$ induced by the union of the $B_i$, $1 \leq i \leq n$. Then there are at most $2(d - 3)$ edges connecting corners in $H$ to corners in $G_d - H$. There are 2 corners in $H$ so for there to be an immersion of $K_d$ there must be at least $2(d - 2)$ edge disjoint paths from $H$ to $G_d - H,$ so we need $2(d - 2) \leq 2(d - 3)$, a contradiction. Thus there is no immersion of $K_d$ with all of its corners in the dock and so there is no immersion of $K_d$ in $G_d$.
\end{proof}

We now give a general construction for a $d$-pod with $d + 1$ vertices for $d \geq 8$. 

\begin{defn}\label{P} Let $P$ be a simple graph with $d + 1$ vertices and minimum degree $d - 2$ with at most $d - 2$ vertices of degree exactly $d - 2$. Split the vertices into two sets, $A$ and $B$. Where $A = \{v \in V(P) : \textrm{deg}(v) = d - 2\}$ and $B = V(P) - A$. A \textbf{gadget} is
\begin{enumerate}
  \item a missing odd cycle in $A$, or
  \item a missing path of length 2 with end vertices in $B$ and middle vertex in $A$.
\end{enumerate}
\end{defn}

\begin{thm} \label{gadget}If $P$, as defined in \ref{P}, has three or more gadgets, then $P$ is a $d$-pod.
\end{thm}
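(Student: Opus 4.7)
The plan is to verify all three clauses of the definition of a $d$-pod for $P$. The degree conditions (minimum degree $d-2$, at most $d-2$ vertices of degree exactly $d-2$) are built into $P$ by Definition~\ref{P}, so the real work is to show that for every maximum matching $M$ on the set $A$ of degree-$(d-2)$ vertices, the graph $P\cup M$ contains no immersion of $K_d$. I would argue by contradiction. Suppose such an immersion exists. Because $|V(P)|=d+1$, exactly one vertex $w$ is not a corner. Every corner has degree at most $d$ in $P\cup M$, so Lemma~\ref{NoPeg} forbids corners from being pegs, and therefore the only possible internal vertex of any path in the immersion is $w$. Hence each path is either a direct edge (invoking Lemma~\ref{edge} between adjacent corners) or a length-2 path $u$--$w$--$v$. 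For each non-adjacent corner pair $(u,v)$ in $P\cup M$ the immersion consumes a distinct edge at $w$ to $u$ and another to $v$, so each corner has at most one non-neighbor among the remaining corners.

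Next I would exploit the structure of the complement $\bar{P}$. Each $A$-vertex has $\bar{P}$-degree exactly $2$ and each $B$-vertex has $\bar{P}$-degree at most $1$, so $\bar{P}$ has maximum degree $2$ and decomposes into paths with both endpoints in $B$ plus cycles contained in $A$. In this decomposition the Type 1 gadgets are precisely the odd cycle components of $\bar{P}$ and the Type 2 gadgets are precisely the length-2 path components, so all gadgets of $P$ are pairwise vertex-disjoint.

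The heart of the proof, and the likely crux, is the claim that if $w$ lies outside a given gadget, that gadget alone already forbids the immersion. For a Type 2 gadget $b_1$--$a$--$b_2$ with $w\notin\{a,b_1,b_2\}$: since $M$ only touches edges inside $A$, it cannot repair the non-adjacencies $ab_1$ and $ab_2$, so corner $a$ retains two missing corner-neighbors, violating the one-non-neighbor rule. For a Type 1 gadget on an odd cycle $a_1,\ldots,a_{2k+1}$ with $w$ outside the cycle: each $a_i$ is a corner whose only non-$P$-neighbors are $a_{i-1},a_{i+1}$, so the one-non-neighbor rule forces $M$ restricted to the cycle to be a perfect matching using cycle edges, which is impossible on an odd cycle. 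Since the gadgets are pairwise vertex-disjoint, $w$ lies in at most one of them, so with three or more gadgets at least two are $w$-free and each produces a contradiction. The main obstacle I expect to handle carefully is ruling out a few workarounds: multi-edge tricks at $w$ when $M$ happens to coincide with a $P$-edge incident to $w$ (so that $uw$ might be used twice), and the fringe cases where $w$ lies at a $B$-endpoint of a Type 2 gadget, where the seemingly allowed length-2 path $a$--$w$--$b_j$ would still require the non-edge $ab_i$ of $P$ and hence fails. These checks are what make the one-non-neighbor rule genuinely constraining.
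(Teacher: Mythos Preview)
Your approach tracks the paper's proof closely: you identify the unique non-corner vertex $w$, observe via Lemma~\ref{NoPeg} that every path in the immersion is either a direct edge or a length-$2$ path through $w$, and then derive a contradiction from the gadget structure. Your complement analysis showing that gadgets are connected components of $\bar P$, and hence pairwise disjoint, is a nice explicit justification of a step the paper leaves implicit.

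However, your ``one-non-neighbor rule'' is not valid as stated, and the claim that \emph{each} $w$-free gadget alone forbids the immersion is false. The issue is exactly the multi-edge workaround you flag but do not resolve: if $w\in A$ and $M$ matches $w$ to some corner $u$ with $uw\in E(P)$, then there are two parallel $uw$-edges in $P\cup M$, so $u$ may legitimately have two non-neighbours among the corners. Concretely, for a Type~2 gadget $b_1$--$a$--$b_2$ with $w\notin\{a,b_1,b_2\}$, if $w\in A$ and $M$ pairs $a$ with $w$, the paths $a$--$w$--$b_1$ and $a$--$w$--$b_2$ are edge-disjoint and available; for a Type~1 odd cycle, if $M$ pairs one cycle vertex $a_i$ with $w$ and perfectly matches the remaining even path $a_{i+1},\dots,a_{i-1}$ along cycle edges, your parity argument collapses. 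So a single $w$-free gadget does not yield a contradiction.

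The repair---and this is precisely what the paper does---is to use two $w$-free gadgets simultaneously. Each $w$-free gadget still contains a corner $x$ (the middle vertex $a$ in Type~2; a cycle vertex not $M$-matched to a cycle neighbour in Type~1) with two missing edges in $P\cup M$. Routing both missing paths at $x$ requires two $xw$-edges, which forces $xw\in M$. With two such vertices $x$ and $y$ coming from distinct $w$-free gadgets, both would need to be $M$-matched to $w$, contradicting that $M$ is a matching. This is the real reason the hypothesis demands three gadgets rather than two: you need two gadgets avoiding $w$, and it is their \emph{joint} demand on the single matching edge at $w$ that closes the argument, not either gadget in isolation.
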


\begin{proof}
Let $P$ be as described with at least three gadgets. Since $P$ is simple and $|V(G)| = d + 1$, $d - 2 \leq \textrm{deg}(v) \leq d$ for all $v \in V(G)$. Since $|A| \leq d - 2$, $P$ satisfies the degree requirements to be a $d$-pod. Add a maximum matching to the vertices in $A$. Suppose this new graph, $P^+$, has an immersion of $K_d$. 

Notice that, since the gadgets are missing odd cycles, or have missing edges between $A$ and $B$, the addition of a matching to $A$ leaves at least one vertex in each gadget incident with two missing edges. There are at least three gadgets, so there are at least three vertices each incident with two distinct missing edges. Call these vertices $x, y,$ and $z$. By Lemmas~\ref{edge} and \ref{NoPeg}, $P^+$ has exactly one peg and it is not a corner, call it $w$.

Suppose $w \in B$, then there are at least two gadgets of which $w$ is not a part. Without loss of generality say $w$ is not part of the gadget involving $x$. Then $w$ must be used on edge disjoint paths to replace both missing edges incident with $x$. However, there is at most one path, in fact an edge, from $w$ to $x$ that is not already used in the immersion. Thus $w$ can be used to replace at most 1 edge incident with $x$. So $w \not\in B$.

Thus, $w \in A$. Then there are at least two gadgets of which $w$ is not a part. Without loss of generality say $w$ is not part of the gadgets containing $x$ and $y$. For there to be an immersion of $K_d$ we must use edge disjoint paths through the corner $w$ to replace the two missing edges incident with $x$ and the two missing edges incident with $y$. Without using the matching, there is one unused edge in the graph from $w$ to $x$ and one unused edge from $w$ to $y$. Thus, to replace all four missing edges there must be a matching edge from $w$ to $x$ and a matching edge from $w$ to $y$, a contradiction. There is at most one matching edge incident with $w$. Therefore, there is no immersion of $K_d$ in $P^+$, that is $P$ is a $d$-pod.
\end{proof}

We have given general constructions for graphs with minimum degree $d - 1$ and no immersion of $K_d$. In the next section we give specific examples and explore some of the characteristics of these examples.

\section{Examples}\label{examples}

In 2010, DeVos et al. proved Conjecture~\ref{main} for $d \in \{5, 6, 7\}$ by proving the following theorem.
\begin{thm} \cite{De} Let $f(d)$ be the smallest integer such that every graph of minimum degree at least $f(d)$ contains an immersion of $K_d$. Then $f(d) = d - 1$ for $d \in \{5, 6, 7\}$.\label{DeVos}\end{thm}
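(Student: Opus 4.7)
The plan is to prove $f(d) = d-1$ in two halves. For the lower bound, $K_{d-1}$ is $(d-2)$-regular but has only $d-1$ vertices, so it cannot contain an immersion of $K_d$ (which demands $d$ distinct corner vertices); hence $f(d) > d-2$.

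For the upper bound $f(d) \leq d-1$, I would argue by induction on $|V(G)|$ that every simple graph $G$ with $\delta(G) \geq d-1$ contains an immersion of $K_d$. The base case $|V(G)| = d$ is immediate, since the simple-graph hypothesis forces $G = K_d$. For the inductive step, let $G$ be a minimal counterexample; by deleting inessential edges, I may assume some vertex $v$ has $\deg_G(v) = d-1$. The key reduction is to pair the edges at $v$ into $\lfloor (d-1)/2\rfloor$ pairs $\{uv,vw\}$ and lift each pair to a new edge $uw$, producing a graph $G'$ on $|V(G)|-1$ vertices in which the degree of every non-$v$ vertex is preserved, so $\delta(G') \geq d-1$. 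For $G'$ to remain simple, the pairing must be chosen as a perfect matching of $N(v)$ whose edges are non-edges of $G[N(v)]$; when $d-1$ is odd the single unpaired edge is routed to a neighbor of degree at least $d$, so that subsequent deletion of $v$ still leaves $\delta \geq d-1$. If such a pairing exists, the inductive hypothesis yields a $K_d$-immersion in $G'$, and unfolding each lifted edge back to the 2-edge walk through $v$ gives a $K_d$-immersion in $G$ in which $v$ serves only as a peg (permitted by Lemma~\ref{NoPeg}, since $v$ is not a corner), contradicting the choice of $G$.

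The main obstacle is to guarantee that a valid pairing always exists. By Tutte's matching theorem, the pairing can fail only when the complement of $G[N(v)]$ has no perfect matching, which forces $G[N(v)]$ to contain nearly all of its $\binom{d-1}{2}$ possible edges, or, in the odd case, when every neighbor of $v$ has degree exactly $d-1$. For $d \in \{5,6,7\}$ one has $|N(v)| \leq 6$, so these obstructing local configurations can be enumerated in a finite case analysis, and in each such case the subgraph on $\{v\}\cup N(v)$ already carries almost all of the edges of $K_d$; one completes the argument by invoking the Corner Separating Lemma together with the edge-connectivity of $G$ to route the few missing adjacencies as edge-disjoint paths through $V(G)\setminus(\{v\}\cup N(v))$, thereby exhibiting a $K_d$-immersion directly. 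It is exactly this dense-neighborhood step that fails for $d \geq 8$, since the pod constructions of Section~\ref{dock} yield counterexamples whose local structure is dense enough to block every valid lifting yet too isolated from the rest of the graph to complete the immersion by rerouting.
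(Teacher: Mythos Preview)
This theorem is not proved in the present paper; it is cited from \cite{De}. The paper only sketches that source's approach: one proves the stronger Theorem~\ref{DeVos2}, which concerns loopless multigraphs with one exceptional vertex $u$ of possibly small degree and at most $d-2$ proper parallel classes, all incident with $u$; a minimal counterexample to that strengthened statement is then analysed (for instance, the neighbours of $u$ are shown to induce a triangle) until a contradiction is reached.

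Your plan is genuinely different. You insist on staying inside the class of simple graphs, so each lifted pair at $v$ must be a non-edge of $G[N(v)]$, and you therefore need a near-perfect matching in $\overline{G[N(v)]}$. The whole point of strengthening to Theorem~\ref{DeVos2} is to avoid exactly this constraint: by permitting a few parallel edges at a single exceptional vertex, the lifting step goes through with no matching requirement, and the strengthened hypothesis absorbs both the possible multi-edges and the possible low-degree vertex created by the reduction.

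The dense-neighbourhood branch of your argument has a real gap. When no suitable matching exists you propose to route the missing adjacencies through $V(G)\setminus(\{v\}\cup N(v))$ by ``invoking the Corner Separating Lemma together with the edge-connectivity of $G$.'' But the Corner Separating Lemma only tells you where corners must lie in a graph that \emph{already} has an immersion; it does not help you construct one. And nothing you have assumed bounds the edge-connectivity of $G$ from below: minimum degree $d-1$ is compatible with cut-edges (two copies of $K_d$ joined by a bridge, for instance), so there is no guarantee of even one detour path from $N(v)$ back to $N(v)$ through the rest of the graph. (Your citation of Lemma~\ref{NoPeg} is also misplaced: that lemma forbids a low-degree \emph{corner} from doubling as a peg, whereas your $v$ is a non-corner peg, which is never an obstruction.) The argument in \cite{De} sidesteps all of this by carrying the exceptional vertex $u$ and the parallel classes through the induction, rather than trying to restore simplicity or appeal to global connectivity after the fact.
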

In the same paper they report a personal communication of an example given by Paul Seymour that shows $f(d) \geq d$ for every $d \geq 10$. Seymour's example for $d = 10$ is: let $G$ be the graph obtained from $K_{12}$ by deleting the edges of four disjoint triangles. Then $G$ has minimum degree nine, but does not contain an immersion of $K_{10}$. This graph has chromatic number four and contains $K_4$ as a subgraph so is not a counterexample to Conjecture~\ref{main}. In \cite{DeD} DeVos et al. provide a general class of examples showing that $f(d) \geq d$ when $d = 10$ or $d \geq 12$. These examples are similar to, but more general than, those given by Seymour and give a finite number of examples for each $d$. However, these examples do not cover the $d = 8, 9$, and $11$ cases. 

In this section we will give examples of pods and graphs for $d \geq 8$. 
We will give three types of examples. In Example~\ref{finiteEx} we give examples with exactly one bay. The inspiration for these graphs comes from the proof of Theorem~\ref{DeVos}. To prove Theorem~\ref{DeVos} the authors prove the following stronger statement. Note that, when two vertices have multiple edges between them we call the set of edges joining them a \textit{proper parallel class}. 

\begin{thm}\cite{De} \label{DeVos2}Let $d \in \{4, 5, 6\}$, let $G = (V, E)$ be a loopless graph and let $u \in V$. Assume further that $G$ satisfies the following properties:
\begin{itemize}
  \item $|V| \geq d$.
  \item deg$(v) \geq d$ for every $v \in V \backslash \{u\}$.
  \item There are at most $d - 2$ proper parallel classes, and every edge in such a parallel class is incident with $u$.
\end{itemize}
Then there is an immersion of $K_{d + 1}$ in $G$.
\end{thm}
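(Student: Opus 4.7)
The plan is to prove this by a minimum-counterexample induction on $|V|$. First I would verify that the three hypotheses together force $|V| \geq d+1$: if $|V| = d$, then each of the $d-1$ vertices in $V \setminus \{u\}$ has at most $d-2$ neighbors in $V \setminus \{u\}$ (the graph is simple there, by the parallel-class hypothesis), so each such vertex needs a parallel class to $u$ to achieve degree $\geq d$, producing $d-1$ parallel classes and contradicting the bound $d-2$. Hence the natural base case is $|V| = d+1$, where the graph is essentially $K_{d+1}$ with at most $d-2$ missing edges (all of them nonincident to $u$) plus some multiedges at $u$. In that base case a direct combinatorial inspection, taking the $d+1$ vertices themselves as the corners and replacing each missing edge by a length-2 detour through $u$, should produce the required immersion since at most $d-2$ such detours are needed while $\deg(u)$ is large enough to supply them edge-disjointly.

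For the inductive step, take $G$ to be a counterexample minimizing $|V(G)|$ and, subject to that, $|E(G)|$. Minimality lets me assume structural regularity: every $v \in V \setminus \{u\}$ has $\deg(v) = d$ exactly (otherwise an appropriately chosen simple edge incident to $v$ could be deleted without violating any hypothesis, contradicting edge-minimality), and the number of parallel classes is exactly $d-2$ or as close as possible. Next I would pick a vertex $v \neq u$ and apply Mader's edge-splitting theorem to lift the edges at $v$ pairwise, producing a graph $G'$ on $|V|-1$ vertices that preserves the edge-connectivity between every pair of vertices in $V \setminus \{v\}$. Then I would apply the induction hypothesis to $G'$ to obtain an immersion of $K_{d+1}$ there, and lift it back to $G$ by replacing each split edge $ab$ by the path $a\,v\,b$.

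The main obstacle is ensuring the hypotheses transfer to $G'$ with the same distinguished vertex $u$. The degree conditions are easily preserved, but the parallel-class condition is delicate: lifting a pair of edges $va, vb$ at $v$ produces a new edge $ab$ and therefore risks creating a parallel class that is not incident with $u$. To control this, I would order the split carefully, pairing any edge $vu$ at $v$ first (so that edges to $u$ are absorbed into $u$-incident parallel classes, which are allowed) and then pairing the remaining edges at $v$ so as to hit vertex pairs $\{a,b\}$ that are not yet adjacent. The combinatorial lemma needed is that when $\deg(v) = d$ and only $d-2$ parallel classes are already present, Mader's theorem furnishes enough splitting flexibility to avoid introducing new non-$u$ parallel classes, and this is exactly where the hypothesis $d \in \{4,5,6\}$ appears to give enough slack. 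The remaining verification, that the corresponding inductive immersion lifts back to a genuine immersion of $K_{d+1}$ in $G$ using edge-disjointness of the split paths through $v$, is routine.
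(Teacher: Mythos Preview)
This theorem is not proved in the present paper; it is quoted from \cite{De}, and the paper only summarizes the original argument in a sentence: the authors of \cite{De} take a minimal counterexample (in vertices, then edges) and derive structural properties leading to a contradiction, in particular that the neighbors of the distinguished vertex $u$ induce a triangle. So there is no full proof here to compare against, only that outline.

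Your proposal shares the minimal-counterexample framework but diverges at the reduction step: rather than analyzing the neighborhood of $u$, you want to split off a vertex $v \neq u$ via Mader's lemma and recurse. That is a reasonable strategy in spirit, but as written it has genuine gaps.

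First, the base case is not correct as stated. When $|V| = d+1$ you assert the graph is $K_{d+1}$ minus at most $d-2$ edges, all nonincident to $u$. Neither claim follows from the hypotheses: nothing bounds $\deg(u)$ from below, so edges at $u$ may well be absent; and the parallel-class hypothesis bounds the number of \emph{vertices} in $V \setminus \{u\}$ that need multiedges to $u$, not the number of missing edges inside $V \setminus \{u\}$ directly. Your detour-through-$u$ argument therefore needs a different count.

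Second, the Mader step is underspecified. Complete splitting-off at $v$ requires $\deg(v)$ to be even (otherwise one edge is stranded); for $d = 5$ your minimal counterexample has $\deg(v) = 5$, and deleting the leftover edge drops some other vertex below degree $d$. Mader's lemma also needs a connectivity hypothesis (typically that $v$ is not incident with a cut-edge), which you have not verified.

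Third, and most seriously, the sentence ``this is exactly where the hypothesis $d \in \{4,5,6\}$ appears to give enough slack'' is a hope, not an argument. Mader's theorem lets you choose splitting pairs preserving local edge-connectivity; it does not let you simultaneously avoid a prescribed list of pairs so as to prevent new non-$u$ parallel classes. You would need a separate combinatorial lemma guaranteeing a compatible pairing exists, and none is supplied. Until that lemma is formulated and proved, the inductive step does not close.
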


They prove this theorem by supposing there is a minimal (in terms of vertices and edges) counterexample, $G$. They prove properties about $G$ which lead to no such graph existing. In their proof they have a vertex $u$ that may have degree smaller than $d$ and whose neighbors form a complete graph on three vertices. In our initial graphs the pods will be similar to $G$ without $u$. We begin by exploring the case where $d = 8$. In giving our example for $d = 8$ we are showing that Theorems~\ref{DeVos} and \ref{DeVos2} cannot be extended to $d = 8$. We then give examples for each $d \geq 8$. These examples will have chromatic number $d - 3$ or $d - 2$.

In Example~\ref{infinite} we give an infinite number of examples for each $d$ by increasing the number of bays in the graph. These graphs will have chromatic number $d - 2$ and will be 3-edge-connected.

In Example~\ref{LargeConn} we will give examples with edge-connectivity up to $d - 2$ and chromatic number $d - 2$.

\begin{example}{Examples with One Bay}\label{finiteEx}\end{example}

For the $d = 8$ case we form the following graph, $P_8$. 

\begin{defn} Define $P_8$ as follows. Begin with a $K_9$. Remove three disjoint paths of length 2.\end{defn}

The graph $P_8$ is shown in Figure~\ref{Example8}.

\begin{figure}[htbp]
\centering
\subfigure{
\def\svgwidth{2.25in}
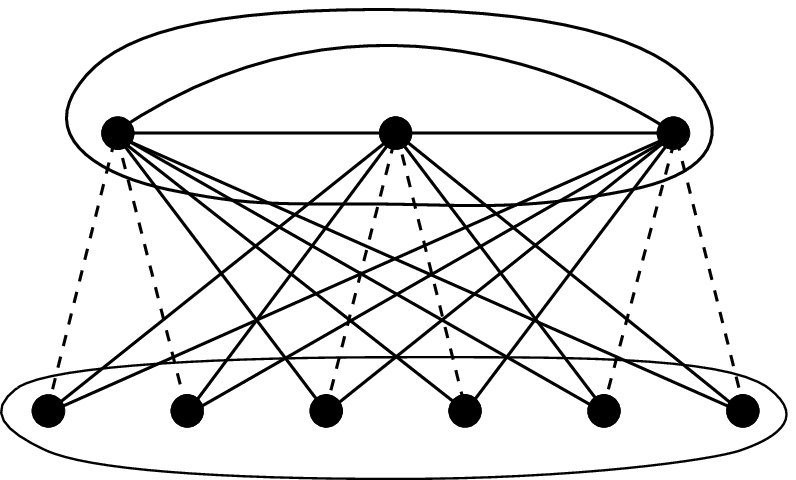
}
\subfigure{
\def\svgwidth{1.75in}
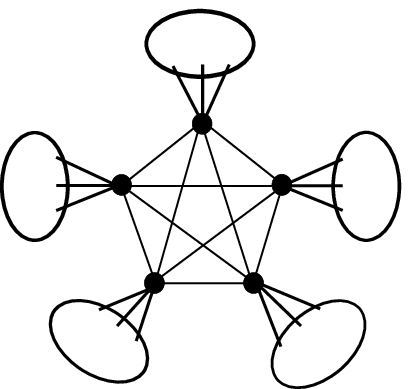
}
\caption[Optional caption for list of figures]{An 8-pod, $P_8$, dotted lines are missing edges, and a possible $G_8$.}
\label{Example8}
\end{figure}

\begin{lem} $P_8$ is an 8-pod.\end{lem}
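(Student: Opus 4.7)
The plan is to recognize that $P_8$ is precisely the sort of graph covered by Theorem~\ref{gadget}, so the work reduces to matching definitions. First I would compute the degree sequence of $P_8$: starting from $K_9$, every vertex has degree $8$, and when we delete a path of length $2$, say $a\!-\!b\!-\!c$, the middle vertex $b$ loses two edges while each endpoint loses one. Since the three removed paths are pairwise disjoint and $P_8$ has $9 = d+1$ vertices, the three paths exhaust the vertex set: $3$ vertices end up with degree $d-2 = 6$ (the middles) and $6$ vertices end up with degree $d-1 = 7$ (the endpoints). In particular the minimum degree is $d - 2$ and exactly $3 \leq d - 2$ vertices attain it, so $P_8$ meets the hypotheses of Definition~\ref{P}.

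Next I would identify the sets $A$ and $B$ from Definition~\ref{P}: $A$ is the set of three path-middles and $B$ is the set of six path-endpoints. Then I would point out that each of the three deleted paths of length $2$ has both endpoints in $B$ and its middle vertex in $A$, so each deleted path is, by definition, a gadget of type~(2). Thus $P_8$ contains exactly three gadgets.

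Having verified both the degree conditions and the existence of three gadgets, I would invoke Theorem~\ref{gadget} directly to conclude that $P_8$ is an $8$-pod. Since Theorem~\ref{gadget} has already done the nontrivial work of showing that even after a maximum matching is added on $A$ there can be no immersion of $K_d$, no further analysis is needed here. The main (and only) thing to be careful about is the bookkeeping that the three deleted paths really do partition the vertex set and land with middles in $A$ and endpoints in $B$; once that is checked, the lemma follows immediately.
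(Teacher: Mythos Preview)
Your proposal is correct and follows essentially the same approach as the paper: define $A$ and $B$ by degree, observe that the three removed paths of length~$2$ are gadgets of type~(2), and invoke Theorem~\ref{gadget}. Your version is a bit more explicit about the degree bookkeeping, but the argument is the same.
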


\begin{proof} 
We can split the vertices into two sets $A$ and $B$. 

Let $A = \{v \in V(P_8)~| \textrm{ deg}(v) = 6\}$ and $B = V(P_8) - A$. Notice that, $|A| = 3 < 6 = d - 2$. The three missing paths of length 2 are gadgets because they have end vertices in $B$ and middle vertex in $A$. Therefore, we can use Theorem~\ref{gadget} to conclude $P_8$ is an 8-pod.
\end{proof}

\begin{thm} \label{G8}Let $G_8$ be the graph with $K_5$ as its only bay, filled with the 8-pods $P_8$. Then $G_8$ has minimum degree 7 and no immersion of $K_8$. \end{thm}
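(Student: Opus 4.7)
The plan is to realize $G_8$ as a direct instance of the construction treated by Theorem~\ref{ours}, and then quote that theorem. The only substantive work is to verify that $K_5$ and $P_8$ fit the formal definitions of an $8$-bay and an $8$-pod, and that $K_5$ can actually be filled by copies of $P_8$ so that every bay vertex ends up with degree at least $7$.

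First I would check the structural side. The graph $K_5$ qualifies as an $8$-bay because $|V(K_5)| = 5 \le d - 2 = 6$. Since $K_5$ is the only bay, the inter-bay constraints in the definition of a dock (at most $d - 3$ edges between consecutive bays, no edges between nonconsecutive bays) are vacuously satisfied, so the skeleton of $G_8$ is an $8$-dock. The preceding lemma already tells us that $P_8$ is an $8$-pod; by construction, its degree-$6$ set $A$ consists of exactly the three middle vertices of the three removed paths of length $2$, so $|A| = 3$ and each attached copy of $P_8$ contributes exactly three edges to the bay.

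Next I would carry out the filling step by a simple counting/design argument. Each vertex of $K_5$ has degree $4$ inside the bay and must reach degree at least $d - 1 = 7$, so must receive at least three pod edges. Attaching five copies of $P_8$ provides $5 \cdot 3 = 15$ pod edges, exactly enough to hit each bay vertex three times. A concrete feasible assignment is the cyclic one: label the pods and the bay vertices by $\mathbb{Z}/5\mathbb{Z}$ and let the three $A$-vertices of pod $i$ be joined to bay vertices $i,\, i+1,\, i+2 \pmod{5}$; then each bay vertex is the endpoint of exactly three pod edges. After the attachment each $A$-vertex has degree $6 + 1 = 7$, each $B$-vertex of a pod retains degree $7$, and each bay vertex has degree $4 + 3 = 7$, so the (only) bay is full.

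With $G_8$ now certified as an $8$-dock in which every bay is full, Theorem~\ref{ours} applies verbatim and yields that $G_8$ has minimum degree $7$ and contains no immersion of $K_8$. The main obstacle is really only the feasibility check in the second step, and that reduces to exhibiting a $3$-regular bipartite graph between two sides of size $5$, which the cyclic construction makes immediate.
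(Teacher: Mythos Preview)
Your proof is correct and follows the same approach as the paper: verify that $K_5$ is an $8$-bay and then invoke Theorem~\ref{ours}. Your explicit feasibility argument for filling the bay (via the cyclic attachment of five copies of $P_8$) is more detailed than the paper, which simply takes the filling as given in the statement and exhibits one instance in a figure, but this is elaboration rather than a different route.
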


\begin{proof}The $K_5$ is a bay because it has $5 = d - 3 < d - 2$ vertices. The statement follows from Theorem~\ref{ours}. 
\end{proof}

A possible $G_8$ is shown in Figure~\ref{Example8}. Notice that, the 8-pods in this figure are each added to a single vertex giving an example that is 1-vertex connected. Since the bay in $G_8$ is the entire $K_5$ the 8-pods, $P_8$, may be added so that they connect to multiple vertices, and/or more pods may be added, giving different examples still satisfying Theorem~\ref{G8}. The resulting examples could be 2 or 3-vertex-connected. However, all configurations of $G_8$ will be 3-edge-connected. The chromatic number of $G_8$ is 6 and $G_8$ has a subgraph of $K_6$.

We will build all of our examples in a similar way. We must construct $G_d$ and $P_d$ in general. We construct $G_d$ by starting with a $K_{d - 3}$ as the only $d$-bay in a $d$-dock and make this bay full by adding copies of the $d$-pod $P_d$. 

\begin{defn}\label{P_d} For every $d \geq 8$ we construct $P_d$ in the following way. Begin with a $K_{d + 1}$. Remove three disjoint paths of length 2. Remove a maximum matching from the vertices that are not on these paths. \end{defn}

Note that, if $d$ is odd there will be one vertex in $P_d$ of degree $d$. The graph $P_d$, for $d$ odd, is shown in Figure~\ref{GeneralH}, the dotted lines represent missing edges. 

\begin{figure}[htbp]
\centering
\def\svgwidth{4.75in}
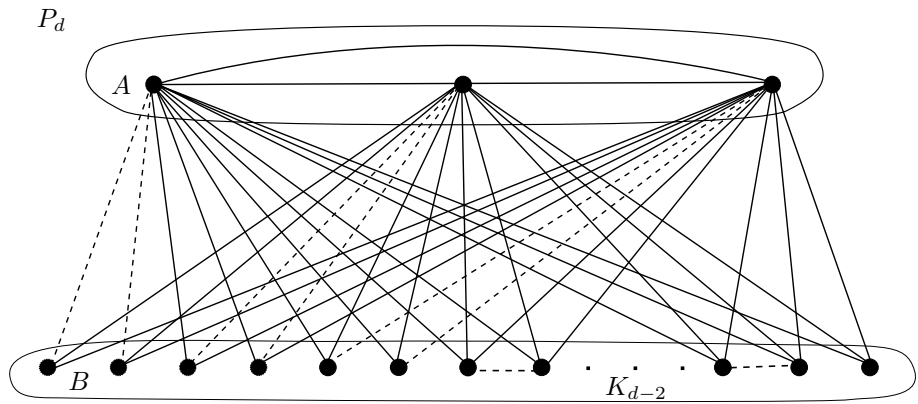
\caption[Optional caption for list of figures]{A $d$-pod, $P_d$, with $d$ odd. Dotted lines are missing edges.}
\label{GeneralH}
\end{figure}

\begin{lem}\label{PdPod} $P_d$ is a $d$-pod. \end{lem}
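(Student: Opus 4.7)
The plan is to reduce directly to Theorem~\ref{gadget}. I would first verify that $P_d$ fits the framework of Definition~\ref{P}, and then exhibit the three required gadgets.

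For the verification step, I would compute the degrees in $P_d$. Starting from $K_{d+1}$, every vertex has degree $d$; deleting the edges of three vertex-disjoint paths of length~$2$ (which is possible since $d\geq 8$ gives $d+1\geq 9$) drops each of the three middles to degree $d-2$ and each of the six endpoints to degree $d-1$. The subsequent maximum matching on the remaining $d-8$ vertices drops each matched vertex to degree $d-1$ and, if $d$ is odd, leaves exactly one vertex at degree $d$. Thus the set $A=\{v:\deg(v)=d-2\}$ consists of precisely the three path-middles, so $|A|=3\leq d-2$ (using $d\geq 8$) and the minimum degree is $d-2$, matching the hypotheses of Definition~\ref{P}.

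Next I would locate the gadgets. Each of the three deleted length-$2$ paths has its middle in $A$ and both endpoints in $B$, so by clause~(2) of Definition~\ref{P} each one is a gadget. These three gadgets are vertex-disjoint, hence certainly distinct, and Theorem~\ref{gadget} applied with at least three gadgets concludes immediately that $P_d$ is a $d$-pod.

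The only point worth checking carefully is that the matching deletions do not contaminate the analysis. Since all three vertices of $A$ already lie on the three paths, the matching removes only edges inside $B$; in particular it neither creates a missing odd cycle in $A$ nor adds a missing length-$2$ path with middle in $A$. In fact, $A$ still induces a triangle in $P_d$, so no type-(1) gadget is produced, but, happily, none is needed to reach the threshold of three required by Theorem~\ref{gadget}.
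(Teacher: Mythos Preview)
Your proposal is correct and follows essentially the same route as the paper: identify $A$ as the three middle vertices of the deleted paths, note $|A|=3\le d-2$, observe that the three missing length-$2$ paths are type-(2) gadgets, and invoke Theorem~\ref{gadget}. You add some extra detail (explicit degree counts and the check that the matching does not create or destroy gadgets), but the argument is the same.
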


\begin{proof}
We can split the vertices into two sets $A$ and $B$. 

Let $A = \{v \in V(P_d)~| \textrm{ deg}(v) = d - 2\}$ and let $B = V(P_d) - A$. The only vertices of degree $d - 2$ are those that are the centers of the missing paths, therefore $|A| = 3 < d - 2$ for $d \geq 8$. The three missing paths are gadgets because they have end vertices in $B$ and middle vertex in $A$. Therefore, we can use Theorem \ref{gadget} to conclude $P_d$ is a $d$-pod.
\end{proof}

\begin{thm}\label{Gd}
Let $G_d$ be the graph with $K_{d - 3}$ as its only bay filled with copies of $P_d$. The graph $G_d$ has minimum degree $d - 1$ and no immersion of $K_d$.
\end{thm}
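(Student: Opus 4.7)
The plan is a direct application of Theorem~\ref{ours}, once we check that the construction of $G_d$ genuinely fits the hypotheses of that theorem. There are essentially three bookkeeping checks, and no real combinatorial obstacle beyond them.

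First I would verify that $K_{d-3}$ qualifies as a $d$-bay: it has $d-3\leq d-2$ vertices, so it meets the cardinality requirement in the definition of a bay. A $d$-dock consisting of this single bay trivially satisfies the inter-bay edge bound (there are no distinct pairs of consecutive bays, so the condition of at most $d-3$ edges between consecutive bays is vacuous, and likewise there are no nonconsecutive bays to connect).

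Second, by Lemma~\ref{PdPod} each copy of $P_d$ is a legitimate $d$-pod. By construction, $P_d$ has exactly three vertices of degree $d-2$ (the middle vertices of the three removed paths), so attaching one copy of $P_d$ to the bay contributes exactly three edges between $V(P_d)$ and $V(K_{d-3})$.

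Third, I would verify that the bay can indeed be filled by copies of $P_d$ in the sense of the definition. Each vertex of $K_{d-3}$ has internal degree $d-4$, so it needs at least three additional incident edges from attached pods to reach degree $d-1$. Using $d-3$ copies of $P_d$ supplies exactly $3(d-3)$ pod-to-bay edges, which can be distributed so that each of the $d-3$ bay vertices receives precisely three pod-edges; this makes the bay full. (More copies may of course be used to obtain other admissible configurations.) With the bay full and every pod a valid $d$-pod, Theorem~\ref{ours} immediately yields that $G_d$ has minimum degree $d-1$ and no immersion of $K_d$. The ``hard part'' has already been done inside Lemma~\ref{PdPod} and Theorem~\ref{ours}; the present theorem is a direct corollary and the only work is the degree accounting sketched above.
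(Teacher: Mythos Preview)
Your proposal is correct and follows exactly the paper's approach: the paper's own proof is the single line ``The statement follows from Theorem~\ref{ours},'' and you have simply spelled out the routine verifications (that $K_{d-3}$ is a $d$-bay, that a single-bay dock vacuously satisfies the inter-bay edge bounds, that $P_d$ is a $d$-pod by Lemma~\ref{PdPod}, and that the bay can be filled). No additional idea is needed.
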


\begin{proof}The statement follows from Theorem~\ref{ours}.\end{proof}

Notice that, for $d =8, 9$, $\chi(G_d) = d - 2$ and $G_d$ has a subgraph of $K_{d - 2}$. For $d \geq 10$, $\chi(G_d) = d - 3$ and $G_d$ has a subgraph of $K_{d - 3}$ and an immersion of $K_{d - 1}$. Therefore $G_d$ is not a counterexample to Conjecture~\ref{main}. Notice that, given the construction of $G_d$ we can create a 1-vertex-connected graph by attaching pods to exactly one vertex in the bay, or 2 or 3-vertex-connected graphs by attaching pods to multiple vertices in the bay, however in all cases we get a 3-edge-connected graph. Next we create examples with multiple bays for each $d \geq 8$.

\begin{example}{Examples with Multiple Bays}\label{infinite}\end{example}

The examples given so far have exactly one bay in a dock. We will now consider several cases where the dock has more than one bay. Let $d \geq 8$ be a fixed integer. We will use the same pods, $P_d$, as in Definition~\ref{P_d} and will use bays labeled $B^0, \ldots, B^{n - 1}$, each bay $B^i$ is isomorphic to $K_{d - 2}$. We form the docks by placing $n$ copies of $K_{d - 2}$, the $B^i$, in a circle. The idea for connecting consecutive bays, $B^i$, is to add edges from half of the vertices in $B^i$ to half of the vertices in the next bay, $B^{i + 1}$, and edges from the other half of the vertices in $B^i$ to half of the vertices in the previous bay, $B^{i - 1}$. The following is a precise description of how to connect consecutive bays. 

\begin{defn}\label{connect bays} Label the vertices of $B^i$ as
$$a^i_1, a^i_2, \ldots, a^i_{d - 2}.$$
\textbf{Connect consecutive bays} by adding edges between $B^i$ and $B^{i - 1}$, where we consider the superscripts $\mod n$,  so that 
$$a^i_j \textrm{ is  adjacent to } a^{i - 1}_{d - 1 - j} \textrm{ for } 0 \leq i \leq n -1, \textrm{ and }1 \leq j \leq \bigg\lceil\frac{d - 2}{2}\bigg\rceil.$$
\end{defn}



\begin{thm} Let $d \geq8$ and $G^n_d$ be a graph with a $d$-dock formed by connecting the $B^i$ ($1 \leq i \leq n$) as described in Definition~\ref{connect bays}. Each $B^i$ is made full by adding copies of the $d$-pod $P_d$. Then $G^n_d$ has minimum degree $d - 1$ and no immersion of $K_d$. \end{thm}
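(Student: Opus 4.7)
The plan is to show that $G^n_d$ fits the hypotheses of Theorem~\ref{ours}, from which both conclusions (minimum degree $d-1$ and the absence of an immersion of $K_d$) will follow at once. The verification breaks into three ingredients: (a)~the bays $B^i$ form a valid $d$-dock, (b)~each $B^i$ is full, and (c)~each attached $P_d$ is a $d$-pod.

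For (a), I would first note that each $B^i \cong K_{d-2}$ has exactly $d-2$ vertices and so qualifies as a $d$-bay. Definition~\ref{connect bays} places edges only between $B^i$ and $B^{i\pm 1}$, so nonconsecutive bays are disjoint; and between $B^i$ and $B^{i-1}$ the definition produces one edge for each index $j \in \{1,\ldots,\lceil(d-2)/2\rceil\}$, giving $\lceil(d-2)/2\rceil$ edges in total. Since $\lceil(d-2)/2\rceil \leq d-3$ already for $d \geq 5$, the dock edge bound holds comfortably in our range $d \geq 8$, and the circular arrangement of the $B^i$ is a genuine $d$-dock.

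For (b), fullness is explicit in the hypothesis, but it is worth pointing out that such a filling is always achievable: every vertex of $B^i$ has internal degree $d-3$ and picks up one (or two, in the middle-index case when $d$ is odd) inter-bay edges, so it is short by at most one edge from reaching degree $d-1$, and each copy of $P_d$ attached to $B^i$ supplies three new edges to the bay through its three vertices of degree $d-2$, so enough pods can always be attached. For (c), Lemma~\ref{PdPod} already tells us $P_d$ is a $d$-pod.

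With (a)--(c) verified, Theorem~\ref{ours} applies directly and yields both parts of the conclusion. There is no genuine obstacle here; this statement is an assembly step, with the substantive content sitting inside Theorem~\ref{gadget}, Lemma~\ref{PdPod}, and Theorem~\ref{ours}. The only calculation worth flagging explicitly is the dock inequality $\lceil(d-2)/2\rceil \leq d-3$, which is a one-line arithmetic check.
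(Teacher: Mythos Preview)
Your proposal is correct and follows essentially the same route as the paper: verify that the $B^i$ are $d$-bays, that the inter-bay edge count $\lceil (d-2)/2 \rceil$ stays below $d-3$, invoke Lemma~\ref{PdPod} for the pods, and then apply Theorem~\ref{ours}. The paper's proof is nearly identical, only omitting your extra remark on why fullness is achievable.
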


\begin{proof} We must show that we may apply Theorem~\ref{ours}. We proved in Lemma~\ref{PdPod} that the $P_d$ satisfy the definition of a $d$-pod. The $B^i$ are indeed bays because they are copies of $K_{d - 2}$. The number of edges between consecutive bays is $\lceil\frac{d - 2}{2}\rceil$ and 
$$\bigg\lceil\frac{d - 2}{2}\bigg\rceil \leq \frac{d}{2} \leq d - 3$$
for $d \geq 4$. Now we may apply Theorem~\ref{ours} to determine $G^n_d$ is a graph with minimum degree $d - 1$ and no immersion of $K_d$.
\end{proof}

Notice that, since the bays are copies of $K_{d - 2}$, $\chi(G^n_d) \geq d - 2$. In fact $\chi(G^n_d) = d - 2$. A case where $d = 9$ and $n = 4$, i.e. there are four bays, is shown in Figure~\ref{M8}. 
\begin{figure}[h!]
\centering 
\def\svgwidth{3.25in}
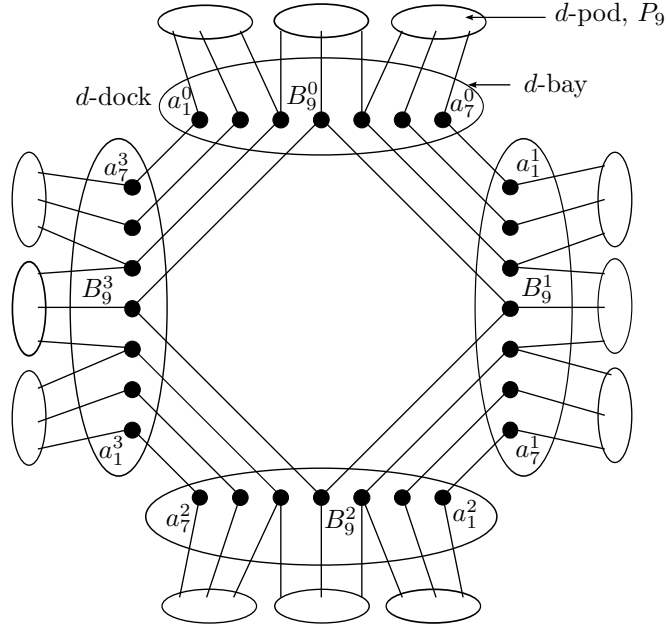
\caption[Optional caption for list of figures]{An example of $G^4_9$ with  3 copies of $P_9$ added to each bay.}
\label{M8}
\end{figure}
Note the $a^i_3, a^i_4, a^i_5$ have degree $d = 9$, but the other vertices in the dock have degree $d - 1 = 8$. This is just one example of $G^4_9$, we could have more (or fewer) bays and/or more pods, $P_9$. The example shown is 3-vertex-connected, but if pods are added in a different way we could have examples with vertex-connectivity 1, 2, or 3. However, the graph $G^n_d$ is always 3-edge-connected. Using the $d$-pods $P_d$ to create a graph $G$ with no immersion of $K_d$ will result in a graph that is at most 3-vertex-connected and exactly 3-edge-connected. Next we create graphs with larger connectivity.

\begin{example}{Examples with Greater Edge-Connectivity}\label{LargeConn}\end{example}

First we give examples of graphs with minimum degree $d - 1$ and no immersion of $K_d$ that can be up to 5-vertex-connected and are exactly 5-edge-connected. To create these examples we will use the same docks as in Definition~\ref{connect bays}, but must modify the pods that are added to the bays.

\begin{defn} For every $d \geq 8$ construct $P^5_d$ in the following way. Begin with a $K_{d + 1}$. Remove two disjoint paths of length 2 and a disjoint 3-cycle. Remove a maximum matching from the vertices not involved in a missing path or the missing 3-cycle.\end{defn}

The superscript of $P^5_d$ indicates the edge-connectivity of the example of which it is a part. The graph $P^5_d$ is shown in Figure~\ref{5Connd} with the edges that connect it to a bay.

\begin{figure}[htb]
\centering
\def\svgwidth{4in}
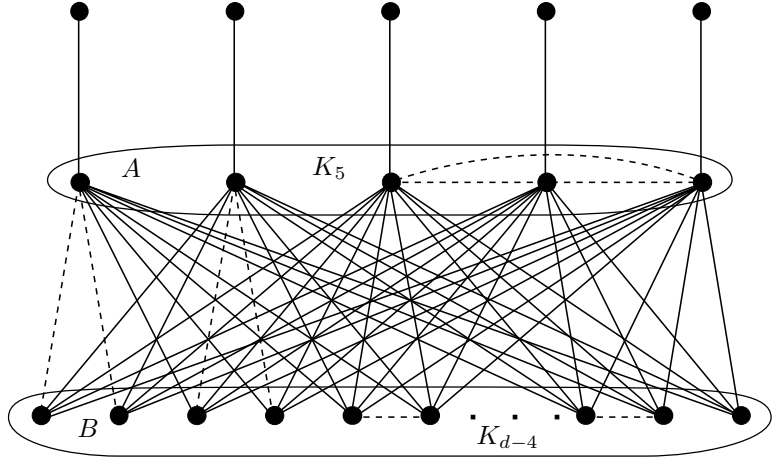
\caption[Optional caption for list of figures]{5-vertex-connected pod, $P^5_d$, with $d$ odd. Missing edges are represented by dotted lines.}
\label{5Connd}
\end{figure}

\begin{lem}\label{5conn}The graph $P^5_d$ is a $d$-pod.\end{lem}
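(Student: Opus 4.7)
The plan is to apply Theorem \ref{gadget} directly. To do so, I first need to verify that $P^5_d$ satisfies the hypotheses of Definition \ref{P}, and then to exhibit three gadgets in the missing-edge structure.

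First, I would compute the degree sequence of $P^5_d$. Starting from $K_{d+1}$ every vertex has degree $d$. Removing a path of length $2$ decreases the middle vertex's degree by $2$ and each endpoint's degree by $1$; removing the triangle decreases each of its three vertices' degrees by $2$; and the final matching removed from the remaining $d-8$ vertices decreases the degree of each covered vertex by $1$. Consequently, the vertices of degree exactly $d-2$ are the two path-centers together with the three vertices of the missing triangle, so $|A|=5$. Since $d\geq 8$, we have $|A|=5\leq d-2$, every vertex has degree at least $d-2$, and the pod degree requirements of Definition \ref{P} are met.

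Next, I would identify the three gadgets. The missing triangle lies entirely in $A$ and is a missing odd cycle, giving one gadget of type (1). Each of the two missing paths of length $2$ has its middle vertex in $A$ and its two end vertices in $B$, giving a gadget of type (2). These three gadgets are pairwise vertex-disjoint by construction, so Theorem \ref{gadget} immediately yields that $P^5_d$ is a $d$-pod.

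The one point to check carefully is that the matching removed in the construction of $P^5_d$ does not interfere with the gadgets. Since that matching is chosen among the $d-8$ vertices not already involved in a path or the triangle, every endpoint of a removed matching edge has degree $d-1$ or $d$ in $P^5_d$ and hence lies in $B$. Thus each missing matching edge is an edge within $B$, disjoint from the three gadgets, and plays no role either in the pod degree count or in the gadget verification. I do not anticipate any substantive obstacle beyond this bookkeeping; the work is essentially a careful accounting of which deleted edges contribute to which part of the $(A,B)$ decomposition.
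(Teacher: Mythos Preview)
Your proposal is correct and follows essentially the same approach as the paper: verify the degree conditions so that $|A|=5\le d-2$ for $d\ge 8$, identify the two missing paths and the missing $3$-cycle as three gadgets, and invoke Theorem~\ref{gadget}. Your write-up is simply more explicit about the degree bookkeeping and about why the removed matching (among the $d-8$ remaining vertices) does not affect the gadget count.
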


\begin{proof}
Split the vertices into two sets $A$ and $B$, as shown in Figure~\ref{5Connd}. Let $A = \{v \in V(P^5_d)~| \textrm{ deg}(v) = d - 2\}$ and let $B =  V(P^5_d) - A$. The only vertices of degree $d - 2$ are the centers of the two missing paths and the three vertices in the missing 3-cycle. Therefore $|A| = 5 < d - 2$ for $d \geq 8$. The two missing paths of length 2 and the missing 3-cycle are gadgets and $|V(P^5_d)| = d + 1$, so we can use Theorem~\ref{gadget} to conclude $P^5_d$ is a $d$-pod.
\end{proof}

\begin{defn} For $d \geq 8$, construct the graph $H^5_d$ as follows. Use $n \geq 1$ bays isomorphic to $K_{d - 2}$, connect consecutive bays as in Definition~\ref{connect bays}. Make the bays full by adding copies of $P^5_d$. \end{defn}

The graphs $H^5_d$ can be 1, 2, 3, 4, or 5-vertex-connected and are 5-edge-connected. The chromatic number of $H^5_d$ is $d - 2$ and these graphs have subgraphs, and thus an immersion, of $K_{d - 2}$.

\begin{thm} The graph $H^5_d$ has minimum degree $d - 1$ and no immersion of $K_d$.\end{thm}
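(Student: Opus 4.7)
The plan is to obtain the result as another application of Theorem~\ref{ours}, by checking that each structural ingredient satisfies its definition. The argument will closely parallel the proof used for $G^n_d$ earlier in the section, with $P^5_d$ playing the role of $P_d$.

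First, Lemma~\ref{5conn} already establishes that $P^5_d$ is a $d$-pod, and each $B^i$ is isomorphic to $K_{d-2}$, hence has exactly $d-2$ vertices and qualifies as a $d$-bay. Next, the dock condition requires bounds on the edge counts between bays: by Definition~\ref{connect bays} there are $\lceil (d-2)/2 \rceil$ edges between each pair of consecutive bays and no edges between nonconsecutive ones. Since
\[
\Big\lceil \frac{d-2}{2}\Big\rceil \leq \frac{d}{2} \leq d-3
\]
for $d \geq 6$, and we assume $d \geq 8$, the arrangement of the $B^i$ is a legitimate $d$-dock.

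Finally, I will check that each $B^i$ is full once enough copies of $P^5_d$ have been attached. Every vertex of $B^i$ already has degree $d-3$ inside its $K_{d-2}$ bay and picks up one or two additional edges from neighboring bays under Definition~\ref{connect bays}, so its residual deficit toward degree $d-1$ is at most $2$. Each copy of $P^5_d$ supplies $|A|=5$ attachment edges (one from each of the two missing-path centers together with the three vertices of the missing $3$-cycle, which are exactly the vertices of degree $d-2$), so distributing enough copies among the bay vertices with positive deficit yields degree at least $d-1$ at every vertex of every bay. With all hypotheses of Theorem~\ref{ours} verified, the minimum-degree conclusion and the absence of a $K_d$-immersion follow directly.

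The only delicate step is this degree-bookkeeping for fullness, but since each pod provides five attachment edges while each bay vertex needs at most two further edges, and distinct copies of $P^5_d$ can be attached to overlapping sets of bay vertices, no real obstacle arises; the proof is thus a short verification followed by citation of Theorem~\ref{ours}.
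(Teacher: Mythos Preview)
Your proof is correct and follows exactly the paper's approach: the paper's own proof is the single line ``The proof is by Theorem~\ref{ours} and Lemma~\ref{5conn},'' and you have simply spelled out the routine verifications (that the $B^i$ are $d$-bays, that the edge count between consecutive bays is at most $d-3$, and that the bays can be made full) which the paper leaves implicit in its definitions. Your more careful bound $d\ge 6$ for $\tfrac{d}{2}\le d-3$ is in fact sharper than the paper's stated $d\ge 4$, but either way the hypothesis $d\ge 8$ suffices.
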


\begin{proof}The proof is by Theorem~\ref{ours} and Lemma~\ref{5conn}.\end{proof}

A similar idea will give us examples with edge-connectivity $k$ where $d \geq 9$ and $7 \leq k \leq d - 2$. To do this we will give a new way to construct $d$-pods with exactly $k$ vertices of degree $d - 2$, which we will label $P^k_d$.  

\begin{defn}\label{k-conn} Construct $P^k_d$, for $d \geq 9$, $7 \leq k \leq d - 2$ as follows. Begin with a $K_{d + 1}$. We continue with two cases depending on the parity of $k$:

\begin{enumerate}
  \item If $k$ is  an odd integer, then remove a path of length 2, a 3-cycle, and a $C_{k - 4}$. These should be disjoint. Finally, remove a maximum matching of the vertices not in the path, the 3-cycle, or the $C_{k - 4}$.
  
  \item If $k$ is an even integer, then remove three disjoint 3-cycles. If $k - 9 = 1$, remove a path of length 2 disjoint from the 3-cycles. If $k - 9 > 1$ remove a $C_{k - 9}$ disjoint from the 3-cycles. Finally remove a maximum matching of the vertices that have not yet been used.
  \end{enumerate}
\end{defn}

\begin{lem} $P^k_d$ is a $d$-pod.\label{kconn}
\end{lem}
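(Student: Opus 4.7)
The plan is to apply Theorem~\ref{gadget}, mirroring the proofs of Lemmas~\ref{PdPod} and~\ref{5conn}. The two things I must verify are that $P^k_d$ has $d+1$ vertices with minimum degree $d-2$ and at most $d-2$ vertices of degree exactly $d-2$, and that $P^k_d$ contains at least three gadgets in the sense of Definition~\ref{P}. Since $P^k_d$ starts from $K_{d+1}$, the vertex count is automatic, and the rest splits naturally on the parity of $k$.

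For the degree bookkeeping, I would observe that all removed substructures (the length-$2$ paths, the $3$-cycles, the $C_{k-4}$ or $C_{k-9}$, and the final matching) are pairwise vertex-disjoint by design, so each vertex of $K_{d+1}$ loses at most two of its $d$ incident edges. Let $A$ be the set of vertices of degree exactly $d-2$. These are precisely the vertices in the removed cycles together with the middle vertices of the removed length-$2$ paths, since matched vertices and path endpoints lose only one edge. A direct tally yields $|A|=k$ in every subcase: in Case 1 the path middle, the $3$-cycle, and the $C_{k-4}$ contribute $1+3+(k-4)=k$; in Case 2 the three $3$-cycles contribute $9$, and either the auxiliary path middle (when $k=10$) or the auxiliary $C_{k-9}$ (when $k-9>1$) supplies the remaining $k-9$ vertices. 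Since $k\le d-2$ by hypothesis, the degree condition of Theorem~\ref{gadget} is satisfied.

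To exhibit three gadgets, in Case 1 ($k$ odd, $k\ge 7$) the missing $3$-cycle and the missing $C_{k-4}$ both lie in $A$ and have odd length (using that $k-4$ is odd and at least $3$), so they are type-(1) gadgets; the missing length-$2$ path has its middle in $A$ and both endpoints in $B$ (the endpoints are excluded from the final matching and so retain degree $d-1$), so it is a type-(2) gadget. In Case 2 ($k$ even), the three disjoint missing $3$-cycles are three odd cycles in $A$ and immediately supply three type-(1) gadgets, regardless of which auxiliary structure is used to adjust $|A|$ to $k$. In either case Theorem~\ref{gadget} applies and gives that $P^k_d$ is a $d$-pod. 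I do not anticipate a substantive obstacle: the whole argument is a straightforward case split, and the only care required is verifying, in each subcase, that path endpoints and matched vertices end up in $B$ rather than $A$, which follows directly from the disjointness convention in Definition~\ref{k-conn}.
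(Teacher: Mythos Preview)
Your proposal is correct and follows exactly the paper's approach: both argue by direct appeal to Theorem~\ref{gadget}, verifying that $|V(P^k_d)|=d+1$, that $|A|=k\le d-2$, and that three gadgets are present. Your write-up is simply more explicit than the paper's, which compresses the entire verification into the single sentence ``By construction $P^k_d$ has three gadgets.''
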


\begin{proof} We must show that we may apply Theorem~\ref{gadget}. By construction $|V(P^k_d)| = d + 1$. The minimum degree of $P^k_d$ is $d - 2$, there are $k$ vertices of degree exactly $d - 2$, and $k \leq d - 2$ by definition. Let $A = \{v \in V(P^k_d) | \textrm{ deg}(v) = d - 2\}$ and $B = V(P^k_d) - A$. By construction $P^k_d$ has three gadgets. Thus by Theorem~\ref{gadget} $P^k_d$ is a $d$-pod.
\end{proof}

Now using the same docks as in Definition~\ref{connect bays} we create examples of graphs, which we will label $M_d^k$, with minimum degree $d - 1$, edge-connectivity $k$ ($7 \leq k \leq d - 2$) and no immersion of $K_d$. 

\begin{defn}\label{M} For $d \geq 9$ form the graph $M^k_d$ using $n \geq 1$ bays that are isomorphic to $K_{d - 2}$, connect consecutive bays as in Definition~\ref{connect bays}. Make the bays full with copies of the $d$-pod $P^k_d$.\end{defn}

\begin{thm} The graph $M^k_d$ has minimum degree $d - 1$ and no immersion of $K_d$. \end{thm}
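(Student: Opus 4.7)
The plan is to apply Theorem~\ref{ours} directly; by construction $M^k_d$ is precisely a $d$-dock in which every $d$-bay has been made full, so the result should fall out immediately once the hypotheses of that theorem are checked. The verification splits cleanly into four items, matching the four structural ingredients in the definitions of dock, bay, pod, and ``full''.

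First I would confirm the bay hypothesis: each $B^i$ is isomorphic to $K_{d-2}$ and so has exactly $d-2$ vertices, which satisfies the definition of a $d$-bay. Second, I would check the dock hypothesis: Definition~\ref{connect bays} adds exactly $\lceil (d-2)/2 \rceil$ edges between $B^i$ and $B^{i-1}$ (indices mod $n$) and no edges between non-consecutive bays, so I must verify the inequality
\[
\left\lceil \frac{d-2}{2} \right\rceil \leq d-3,
\]
which holds for $d \geq 9$ (and is in fact what forces the lower bound on $d$ in Definition~\ref{k-conn}). Third, Lemma~\ref{kconn} already tells us $P^k_d$ is a $d$-pod, so the pieces attached at each bay are legitimate pods. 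Fourth, the construction of $M^k_d$ explicitly stipulates that enough copies of $P^k_d$ are attached so that each bay vertex achieves degree at least $d-1$, i.e.\ each bay is full in the sense of the definition.

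With those four items in hand, Theorem~\ref{ours} supplies both conclusions: the minimum degree is $d-1$ and there is no immersion of $K_d$. The only point that requires even a moment's thought is the arithmetic inequality on the number of edges between consecutive bays, and it is routine. There is no real obstacle here, since all the substantive work has already been invested in Theorem~\ref{ours} (which packages the Corner Separating Lemma together with a case analysis on how the corners can be distributed across the bays) and in Lemma~\ref{kconn} (which uses the three-gadget criterion of Theorem~\ref{gadget} to certify that $P^k_d$ is a pod). The proof of the present theorem is therefore essentially a one-line appeal, mirroring the proof of the analogous statement about $H^5_d$.
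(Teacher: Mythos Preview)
Your proposal is correct and follows exactly the paper's approach: the paper's proof is the single line ``The proof is by Theorem~\ref{ours} and Lemma~\ref{kconn},'' and you have simply spelled out the hypothesis checks that make that appeal legitimate. One small inaccuracy worth noting: the inequality $\lceil (d-2)/2 \rceil \leq d-3$ actually holds for all $d \geq 4$ (as the paper observes in the proof for $G^n_d$), so it is not this inequality that forces $d \geq 9$ in Definition~\ref{k-conn}; rather, that restriction comes from requiring $7 \leq k \leq d-2$.
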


\begin{proof} The proof is by Theorem~\ref{ours} and Lemma~\ref{kconn}.
\end{proof}

Note that, $M^k_d$ is $k$-edge-connected. In fact Definition~\ref{k-conn} tells us that we can create examples that are $(d - 2)$-edge-connected for $d = 9$ and $d \geq 11$. We give the following special example for $d = 10$ and $G$ is 8-edge-connected.

\vspace{.1in}

\noindent\textbf{Example}
Let $P$ be a simple graph with 11 vertices. Begin with a $K_{11}$ and remove two disjoint 3-cycles, a disjoint edge, and a disjoint path of length 2. Now $P$ has 7 vertices of degree 8 and 4 vertices of degree 9. Let $A = \{v \in V(P)~|~\textrm{deg}(v) = 8\}\cup\{w\}$, where $w$ is one of the vertices of the missing edge. Let $B = V(P) - A$. Use the same docks and bays as those used for the $M^k_d$ defined in \ref{M}. Make the bays full by copies of $P$ where we add one edge from each vertex in $A$ to a bay. Notice that, $P$ is not quite a pod because we are adding edges from a vertex of degree 9 to the bays. Let this new graph be called $G$.

\begin{lem} $G$ has minimum degree 8, is 8-edge-connected and has no immersion of $K_{10}$.\end{lem}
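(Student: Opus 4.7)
The plan is to verify each of the three claims in turn, with the third being the delicate one. For the minimum degree I would compute directly: every vertex of the copy of $P$ has in-$P$ degree $8$ (the seven vertices of $A\setminus\{w\}$) or $9$ ($w$ and the three vertices of $B$), and in $G$ each of the eight vertices of $A$ acquires one additional edge to the bay, so every pod vertex ends up with degree at least $9$. A bay vertex sits in a copy of $K_{d-2}=K_8$ (giving $7$ edges) and picks up at least one further edge from either an inter-bay connection or a pod attachment, so $\delta(G)\geq 8$.

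For the edge-connectivity, the eight external edges of any single copy of $P$ form a cut of $G$ of size $8$, giving an upper bound of $8$. To see the matching lower bound, consider an arbitrary non-trivial cut $(S,\bar S)$ of $G$: single-vertex cuts have size at least $\delta(G)=8$; cuts entirely inside a copy of $P$ have size at least $|S|\cdot|V(P)\setminus S|-9\geq 8$ since $P$ is just $K_{11}$ with nine missing edges; cuts entirely inside the dock are handled by the high internal connectivity of the $K_{d-2}$-bays together with the inter-bay edges, exactly as for the graphs $M^k_d$ of Definition~\ref{M}; and mixed cuts combine these lower bounds with the contribution of the external edges. Hence $G$ is $8$-edge-connected.

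Now suppose for contradiction that $G$ has an immersion of $K_{10}$. Each pod-to-dock cut has size exactly $d-2=8$, so the Corner Separating Lemma (applied iteratively across the pods) forces all ten corners to sit either inside a single copy of $P$ or entirely inside the dock. If the corners live in the dock, the case analysis in the proof of Theorem~\ref{ours} applies verbatim (bays have $d-2$ vertices and consecutive bays meet in only $d-3$ edges), yielding a contradiction.

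The remaining case is the genuinely new one. If all ten corners lie in a copy of $P$, the at most four edge-disjoint excursions out of $P$ and back can be absorbed into a virtual matching $M$ on the eight vertices of $A$, and the worst case for us is when $M$ is a perfect matching of $A$. It thus suffices to show that $P^+=P\cup M$ admits no immersion of $K_{10}$. In $P^+$ every vertex has degree at most $d=10$, so Lemma~\ref{NoPeg} forbids any corner from serving as a peg; since $|V(P^+)|=11$, the unique non-corner $u$ is the only available peg and every missing corner-to-corner pair must be realized by a length-two path $c_1-u-c_2$. The matching $M$ fills at most one edge from each of the two missing triangles, so $P^+$ retains at least $9-2=7$ missing edges, at most $2$ of which can be incident to $u$; consequently there are at least $5$ missing corner-to-corner pairs and the number of edges needed at $u$ is at least $10$. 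However $\deg_{P^+}(u)\leq 10$, with equality only if $u=w$, and if $u=w$ then only the one missing edge $wb$ is incident to $u$, so the number of missing corner-to-corner pairs jumps to at least $7-1=6$ and the demand at $u$ becomes at least $12>10$. Either way the count fails and no such immersion exists. The main obstacle is precisely this pod case: because $w$ is a degree-$9$ vertex pressed into service as a member of $A$ (the feature that buys us $8$-edge-connectivity), $P$ is not a standard pod and Theorem~\ref{gadget} cannot be cited directly; the peg-degree bookkeeping has to be done by hand, and the key observation is that $w$'s extra unit of degree ``costs'' only one additional missing edge ($wb$), which is not enough to save the immersion.
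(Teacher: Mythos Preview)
Your argument is correct, and for the delicate pod case it takes a genuinely different route from the paper. After localizing the ten corners to a single copy of $P$ via the Corner Separating Lemma (which both you and the paper do), the paper argues directly in $G$: it names the unique non-corner peg $x$, splits into the cases $x\in A$ and $x\in B$, and in each case isolates one or two specific corners $y,z$ still carrying two unreplaced missing edges, then shows that the edge $xy$ (and the single external edge at $x$, when $x\in A$) cannot supply enough edge-disjoint routes through $x$. Your approach instead absorbs every out-and-back excursion into a virtual matching $M$ on $A$, passes to $P^{+}=P\cup M$, and then runs a single global degree count at the unique peg $u$: at least seven corner--corner pairs remain non-adjacent in $P^{+}$, at most two of them touch $u$, so at least five length-two detours through $u$ are required, overloading $\deg_{P^{+}}(u)\le 9$ (or $\le 10$ when $u=w$, where the sharper bound $m\ge 6$ kicks in). This is essentially the mechanism behind Theorem~\ref{gadget}, adapted by hand to accommodate the extra vertex $w$; it buys you a uniform argument with no $A$/$B$ case split, whereas the paper's version is more concrete but requires tracking the external edge at $x$ separately in each case.

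Two minor remarks. First, your reduction to $P^{+}$ is valid precisely because each vertex of $A$ carries exactly one external edge, forcing the excursions to form a matching on $A$; it would be worth saying this explicitly rather than leaving it at ``worst case.'' Second, your bay-vertex degree estimate (``at least one further edge \ldots so $\delta(G)\ge 8$'') undersells the construction: the bays are made \emph{full}, so in fact every bay vertex has degree at least $d-1=9$. This does not affect the lemma as stated, but it is the reason the construction actually fits the paper's program of producing minimum-degree-$(d-1)$ examples.
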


\begin{proof}
$G$ has minimum degree 8 and is 8-edge-connected by construction. 

Suppose $G$ has an immersion of $K_{10}$. Using the Corner Separating Lemma we see that all of the corner vertices would be in the dock or in a single copy of $P$. We know from Theorem~\ref{ours} that all of the corners cannot be in the dock. Suppose all of the corners are in a copy of $P$. Then there is exactly one peg, label the peg $x$. Suppose $x \in A$. The edges out of $P$ can be used to replace at most one edge in each of the missing 3-cycles. Therefore, there are at least two vertices, say $y$ and $z$, in $A$ distinct from $x$ that are incident with two missing edges. We know $xy, xz \in E(G)$. To replace all four missing edges we must have another path from $x$ to $y$ and an edge disjoint path from $x$ to $z$. This can only be accomplished using the edge out of $P$ that is incident with $x$, but there is only one such edge. Therefore $x \not\in A$. 

Suppose $x \in B$. Then again there is at least one vertex in $A$, say $y$, that is incident with two missing edges, each of which must be replaced by an edge disjoint path through $x$. However, there is only one unused edge incident with $y$, so at most one of these edges can be replaced, i.e. $x \not\in B$. Thus there is no immersion of $K_{10}$ in $G$. 
\end{proof}

We have now given examples of graphs that have minimum degree $d - 1$ that are $(d - 2)$-edge-connected, have chromatic number $d - 2$, and have no immersion of $K_d$ for $d \geq 8$. In the next section we compare our examples with those given in \cite{DeD}.

\section{Comparison of Examples}\label{compare}

We now compare our examples with those given in \cite{DeD} where the authors prove the following theorem.

\begin{thm}\cite{DeD} \label{DeEx}Suppose $H_1, \ldots, H_t$ are simple $D$-regular graphs, each with chromatic index $D + 1$, where $t > \frac{1}{2}D(D + 1)$. Let $G$ be the complement of the graph formed by taking the disjoint union of $H_1, \ldots, H_t$. Letting $n$ denote the number of vertices of $G$, the minimum degree of $G$ is $n - 1 - D$, but $G$ does not contain an immersion of the complete graph on $n - D$ vertices. \end{thm}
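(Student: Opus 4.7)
The minimum-degree claim is immediate: every vertex of $\bigsqcup_i H_i$ has degree $D$ there, so every vertex of the complement $G$ has degree $n - 1 - D$. For the main claim I would argue by contradiction, so suppose $G$ contains an immersion of $K_{n-D}$. Let $J \subseteq V(G)$ denote the set of corners and $T = V(G) \setminus J$ the remaining $D$ vertices. Every vertex of $G$ has degree exactly $(n-D) - 1$, which is the minimum degree a corner must have, so by Lemma~\ref{NoPeg} no corner is used as a peg, and every peg lies in $T$. By Lemma~\ref{edge}, every edge of $G$ between two corners is realized as the direct path between them, so the only corner--corner pairs requiring nontrivial replacement paths are pairs $c, c'$ with $cc' \in E(H_i)$ and $c, c' \in V(H_i)$ for some $i$.

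The hypothesis $t > \tfrac{1}{2}D(D+1)$ now bites via pigeonhole. The $D$ pegs are partitioned among the vertex sets $V(H_i)$, so at most $D$ of the $H_i$ contain any peg, leaving at least $t - D > \binom{D}{2}$ ``corner-only'' indices $i$ with $V(H_i) \subseteq J$. Fix such a corner-only $H_i$. For any $c \in V(H_i)$, all $D$ of $c$'s $H_i$-neighbors lie inside $V(H_i) \subseteq J$, so no peg is $H_i$-adjacent to $c$, and consequently $c$ is $G$-adjacent to every one of the $D$ pegs. The replacement path for each of the $D$ edges of $H_i$ incident to $c$ must leave $c$ through a peg-neighbor, so all $D$ corner-to-peg edges at $c$ are consumed and we obtain a bijection $\pi_c: N_{H_i}(c) \to T$ sending $c'$ to the peg adjacent to $c$ on the path replacing $cc'$.

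The heart of the argument is the following edge-coloring observation. If every replacement path associated with this corner-only $H_i$ were of length exactly $2$, that is, of the form $c$--$p$--$c'$, then setting $\chi(cc') := p$ would define a well-defined edge-coloring of $H_i$ using at most $|T| = D$ colors, and the bijectivity of each $\pi_c$ would force the colors at each vertex to be distinct, making $\chi$ proper. This would give chromatic index $\chi'(H_i) \leq D$, contradicting the hypothesis $\chi'(H_i) = D + 1$. Therefore each of the more than $\binom{D}{2}$ corner-only graphs $H_i$ must produce at least one replacement path of length $\geq 3$.

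Finally, every length-$\geq 3$ replacement path contains at least one internal edge whose endpoints are both pegs (no corner can be internal, again by Lemma~\ref{NoPeg}), and the edge-disjointness of the immersion's paths guarantees that selecting one peg--peg edge from each such path produces distinct edges. Since $G$ contains at most $\binom{D}{2}$ peg--peg edges, the total number of length-$\geq 3$ replacement paths is at most $\binom{D}{2}$. But the corner-only $H_i$'s alone produce strictly more than $\binom{D}{2}$ such paths, the desired contradiction. I expect the main conceptual hurdle to lie in paragraph three: recognizing that length-$2$ replacement paths encode a proper $D$-edge-coloring of $H_i$ is what converts the chromatic-index hypothesis into a structural constraint, and the pigeonhole threshold $t > \tfrac{1}{2}D(D+1)$ is calibrated exactly so that more corner-only graphs demand a long path than peg--peg edges can accommodate.
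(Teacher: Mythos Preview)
This theorem is quoted in the paper from \cite{DeD} and is not proved here; there is no proof in the present paper to compare against. That said, your argument is correct and is essentially the standard proof of this result. The key steps are all in order: every vertex of $G$ has degree exactly $n-D-1$, so Lemma~\ref{NoPeg} forces all pegs into the $D$-element set $T$; Lemma~\ref{edge} reduces the problem to replacing the edges of the $H_i$ lying inside $J$; pigeonhole leaves more than $\binom{D}{2}$ of the $H_i$ entirely inside $J$; for each such $H_i$, the exact degree count at each corner $c$ forces a bijection from $N_{H_i}(c)$ to $T$, so that if all replacement paths for $H_i$ had length~$2$ one would read off a proper $D$-edge-coloring of $H_i$, contradicting $\chi'(H_i)=D+1$; hence each such $H_i$ contributes a path of length at least~$3$, which must contain a peg--peg edge, and edge-disjointness then overloads the at most $\binom{D}{2}$ edges available inside $T$.

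One small point worth making explicit: when you assert that a replacement path of length at least~$3$ contains a peg--peg edge, you are using that \emph{every} internal vertex of the path lies in $T$, not just the first and last. This follows from Lemma~\ref{NoPeg} applied to every corner (each has degree exactly $n-D-1$), so no corner can appear as an interior vertex of any replacement path; you say this, but it is the linchpin that makes the final count go through, and it deserves a clause of its own rather than a parenthetical.
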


Theorem~\ref{DeEx} gives examples that show for $d = 10$ and $d \geq 12$ there are graphs with minimum degree $d - 1$ that contain no immersion of $K_d$. We give examples for $d \geq 8$.

\begin{lem}\label{finite}For a fixed $d$ there are a finite number of examples of the type in Theorem~\ref{DeEx}.\end{lem}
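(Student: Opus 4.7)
The plan is to show that, once $d$ is fixed, the hypotheses of Theorem~\ref{DeEx} force the number of vertices $n$ of $G$ to be bounded by a constant depending only on $d$; finiteness will then follow by routine counting. First I would translate the parameter $d$ into the language of the theorem: the minimum degree of $G$ is simultaneously $d-1$ and $n-1-D$, so fixing $d$ forces $D = n-d$.

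Next, I would exploit that each $H_i$ is simple and $D$-regular, hence has at least $D+1$ vertices. Combined with the hypothesis $t > \tfrac{1}{2}D(D+1)$ this gives
\[
n \ =\ \sum_{i=1}^t |V(H_i)| \ \geq \ t(D+1) \ > \ \frac{D(D+1)^2}{2}.
\]
Substituting $D = n-d$ yields $2n > (n-d)(n-d+1)^2$. The right-hand side is a cubic in $n$ with positive leading coefficient (for $n > d$) while the left-hand side is linear, so the inequality can hold only for $n$ below some threshold $N(d)$.

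With $n \le N(d)$, the parameters $D$, $t$, and each $|V(H_i)|$ are all bounded by constants depending only on $d$. There are only finitely many isomorphism classes of simple $D$-regular graphs on at most $N(d)$ vertices, hence only finitely many unordered tuples $(H_1,\ldots,H_t)$; taking the complement of the disjoint union then gives only finitely many $G$ up to isomorphism.

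I do not expect a serious obstacle. The only step that requires care is the initial identification $D = n-d$, together with the trivial bound $|V(H_i)| \geq D+1$; once these are in hand, the argument is a "cubic beats linear" comparison followed by a finite count. The chromatic-index hypothesis on $H_i$ plays no role in the upper bound — it only restricts the list of admissible $H_i$ further, which can only decrease the count.
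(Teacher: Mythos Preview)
Your proposal is correct and follows essentially the same approach as the paper: both arguments use the relation $n = d + D$, the trivial bound $|V(H_i)| \geq D+1$, and the hypothesis $t > \tfrac{1}{2}D(D+1)$ to obtain a cubic-versus-linear inequality, with the only cosmetic difference that the paper solves it for $D$ (obtaining $d \geq \tfrac{1}{2}D^3 + D^2 + \tfrac{1}{2}D + 1$) while you solve it for $n$. Your closing paragraph on the finite count is slightly more explicit than the paper's, and your remark that the chromatic-index hypothesis is unused for the bound is also correct.
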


\begin{proof}
Let $G$ be a graph of the type described in Theorem~\ref{DeEx}, then $d = n - D$ and $n = \sum_{i = 1}^t|V(H_i)|$. Since the smallest $D$-regular graph with chromatic index $D + 1$ has at least $D + 1$ vertices we get the following,
\begin{align}
	n = \sum_{i = 1}^t|V(H_i)| &\geq (D + 1)t\\
	n = d + D &\geq (D + 1)\bigg(\frac{1}{2}D(D + 1) + 1\bigg)\\
	d &\geq \frac{1}{2}D^3 + D^2 + \frac{1}{2}D + 1.\label{inequal}
\end{align}
	
For fixed $d$ there are only a finite number of solutions (of positive integers) for $D$. Therefore, given $d$ there are only a finite number of examples of the type described in Theorem~\ref{DeEx}.
\end{proof}

For fixed $d$, our constructions allow for an infinite number of examples.

The proof of Lemma~\ref{finite} helps us to see that Theorem~\ref{DeEx} gives examples for $d = 10$ and $d \geq 12$. Notice that, if $d = 11$ Inequality~(\ref{inequal}) gives $10 \geq \frac{1}{2}D^3 + D^2 + \frac{1}{2}D$. The only positive integer for which this is true is $D = 2$. When $D = 2$, $t \geq 4$, and $n = 13$. The smallest 2-regular graph with chromatic index 3 is $K_3$ which has 3 vertices. This means for there to be an example $t = 4$. However, there are not four 2-regular graphs with chromatic index 3 that would give a graph with 13 vertices. Thus there is no example of the type described in Theorem~\ref{DeEx} for $d = 11$. For $d = 10$ and $d \geq 12$ examples can be formed using $D = 2$, $t = 4$, and each $H_i$ some odd cycle.

We noted that the chromatic number of our graphs is $d - 2$, and our graphs have subgraphs, and thus immersions, of $K_{d - 2}$. Let us consider the chromatic number of the graphs described in Theorem~\ref{DeEx}. We know that for each $H_i$, $\chi(H_i) \leq D + 1$ because each $H_i$ is $D$-regular. Therefore, $\Bar{H_i}$ (the complement of $H_i$) is $[|V(H_i)| - (D+1)]$-regular and 
$$\chi(\bar{H_i}) \leq |V(H_i)| - (D + 1) + 1 = |V(H_i)| - D.$$
Since $G$ is the complement of the disjoint union of the $H_i$ we have,
\begin{align*}
    \chi(G) = \sum\chi(\bar{H_i}) & \leq \sum(|V(H_i)| - D)   \\
    & \leq n - tD\\
    & \leq n - D - (t - 1)D
\end{align*}

In the language of these graphs, our graphs have chromatic number $n - D - 2$.

\begin{align*}
(t - 1)D & \geq \bigg(\frac{1}{2}D(D + 1) - 1\bigg)D\\
& \geq \bigg(\frac{1}{2}D^2 + \frac{1}{2}D - 1\bigg)D\\
& \geq \frac{1}{2}D^3 + \frac{1}{2}D^2 - D\\
& > 2 \textrm{ for } D \geq 2.
\end{align*}

Since $D \geq 2$ in all of the graphs in Theorem~\ref{DeEx} the chromatic number of these graphs is smaller than the chromatic number of our graphs. Conjecture~\ref{main} is that every $d$-chromatic graph contains an immersion of $K_d$. Our examples show that chromatic number $d - 2$ is not large enough to give an immersion of $K_d$.

\section{Conclusion and Open Questions}\label{Conc}

In this paper we showed for $d \geq 8$ a graph with minimum degree $d - 1$ need not have an immersion of $K_d$. This adds to the previous work by Lescure and Meyniel in \cite{L} and that done by DeVos et al. in \cite{De} by settling the cases for $d = 8$ and $d = 9$. We also gave infinite families of graphs with minimum degree $d - 1$ and no immersion of $K_d$ that are different than those given in \cite{DeD}. The examples that we give have chromatic number $d - 2$ and while they do not have an immersion of $K_d$, they have a subgraph of $K_{d - 2}$. In creating this family of graphs we realized that connectivity plays a key role in this question. We were able to find graphs with minimum degree $d - 1$ and edge-connectivity $d - 2$, for $d \geq 9$, with no immersion of $K_d$. Given that our Corner Separating Lemma relies on there being at most $d - 2$ edges between different parts of the graph a different approach would be needed to give examples with edge-connectivity greater than $d - 2$. This has led us to ask the following questions.

\begin{enumerate}
  \item Do graphs with large connectivity have to have an immersion of a large complete graph? This would not help in proving the conjecture of Abu-Khzam and Langston, since graphs with large chromatic number may have small connectivity, but might shed some light on the structure necessary to have an immersion of a large complete graph.
  \item Are there $d$-pods with more than $d + 1$ vertices and what do they look like?
  \item The chromatic number of our examples is $d - 2$. Are there examples of graphs with minimum degree $d - 1$ and chromatic number $d - 1$ with no immersion of $K_d$?
\end{enumerate}

While we have not resolved more cases of Conjecture~\ref{main}, we note that there is an infinite class of graphs satisfying Conjecture~\ref{main}. This can be seen by considering Haj\'os's Construction~\cite{Ha}: The following set of operations on simple graphs produce non-$k$-colorable graphs from non-$k$-colorable graphs, and in fact every non-$k$-colorable graph can be constructed by beginning with a $K_{k +1}$ and repeating these operations.
\begin{enumerate}
  \item[($\alpha$)]  Addition of edges and/or vertices to the graph.
  \item[($\beta$)] Identification of two non-adjacent vertices and deletion of the resulting multiple edges.
  \item[($\gamma$)] For two graphs $G_1$ and $G_2$ and $x_iy_i \in E(G_i)$, deletion of $x_1y_1$ and $x_2y_2$, addition of the new edge $y_1y_2$, and identification of the vertices $x_1$ and $x_2$.
\end{enumerate}

The above statement and a proof appear in \cite{La}.

Given this construction a possible approach to proving Conjecture~\ref{main} is to show that immersion is preserved by the operations. 

\begin{lem} Immersions of $K_{d}$ are preserved by applications of $(\alpha)$ and $(\gamma)$.\end{lem}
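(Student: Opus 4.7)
My plan is to handle the two operations separately. For $(\alpha)$, adding vertices or edges produces a supergraph of the input, and an immersion of $K_d$ witnessed by injections $\phi$ and $\phi_E$ in the input continues to witness an immersion in the supergraph, since every corner vertex and every image path is still present; all of the original edges and vertices are still there, so nothing in the definition can fail. This disposes of $(\alpha)$ in one line.

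For $(\gamma)$, let $G_1, G_2$ carry immersions $\phi_1, \phi_2$ of $K_d$, and let $G$ be the output: delete $x_1y_1$ and $x_2y_2$, identify $x_1$ with $x_2$ into a single vertex $x$, and add the new edge $y_1y_2$. The cases $d\leq 2$ are immediate, so assume $d\geq 3$. I would split into three cases. If $\phi_1$ does not use $x_1y_1$, then simply relabeling $x_1$ as $x$ turns $\phi_1$ into an immersion of $K_d$ in $G$; symmetrically, if $\phi_2$ does not use $x_2y_2$, use $\phi_2$ directly. This leaves the substantive case where $\phi_1$ uses $x_1y_1$ and $\phi_2$ uses $x_2y_2$.

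The key observation for this case is that $x_2y_2$ cannot be a bridge of $G_2$. If it were, then applying the Corner Separating Lemma with the edge cutset $C=\{x_2y_2\}$ (of size $1\leq d-2$) would force all $d$ corners of $\phi_2$ to lie on a single side of $x_2y_2$; but then no image path of $\phi_2$ could traverse $x_2y_2$ without recrossing the unique bridge, contradicting the hypothesis that $\phi_2$ uses $x_2y_2$. Hence $G_2-x_2y_2$ contains an $x_2$-$y_2$ path $P$. I then modify $\phi_1$: the edge $x_1y_1$ lies in exactly one image path $\phi_{1E}(e_0)$ (by edge-disjointness), and I replace the single step across $x_1y_1$ in this path by the detour from $x$ along $P$ to $y_2$ and then along the new edge $y_1y_2$ to $y_1$. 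Since $V(G_1)\cap V(G_2)=\{x\}$, the path $P$ meets $V(G_1)$ only at $x$, so the concatenation with the two unchanged segments of $\phi_{1E}(e_0)$ is still a simple path from $\phi_1(u)$ to $\phi_1(v)$. Every other image path of $\phi_1$ lies entirely in $G_1$, whereas the detour uses only edges of $E(G_2)\cup\{y_1y_2\}$, so pairwise edge-disjointness is preserved.

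The hard part is this third case, and the slickness lies in using the Corner Separating Lemma to convert ``$\phi_2$ uses $x_2y_2$'' into the structural statement that $x_2y_2$ is not a bridge of $G_2$, which in turn guarantees the detour path $P$. Once $P$ is in hand, edge-disjointness comes for free because $E(G_1)$ and $E(G_2)$ are disjoint inside $G$, and the vertex-disjointness of $P\setminus\{x\}$ from $V(G_1)\setminus\{x\}$ keeps the modified image path simple.
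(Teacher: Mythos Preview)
Your proof is correct and uses the same two ingredients as the paper's proof: a detour through the other graph via the new edge $y_1y_2$, and the Corner Separating Lemma to rule out the bad case. The only difference is organizational: the paper cases on whether $x_iy_i$ is a bridge in $G_i$ (if either is not, detour through that side; if both are bridges, apply the Corner Separating Lemma to conclude neither immersion uses its deleted edge), whereas you case on whether $\phi_i$ uses $x_iy_i$ and then invoke the Corner Separating Lemma contrapositively to produce the detour path. These are logically equivalent rearrangements of the same argument.
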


\begin{proof}
Let $G$ be a graph with an immersion of $K_{d}$. Adding edges and/or vertices to this graph does not change the immersion of $K_{d}$. Thus, $(\alpha)$ preserves the immersion.

Let $G_1$ and $G_2$ be graphs, each of which has an immersion of $K_d$. Let $x_1y_1 \in E(G_1)$ and $x_2y_2 \in E(G_2)$. Let $H$ be the graph obtained by applying $(\gamma)$.

If there is a path $P_1$ from $x_1$ to $y_1$ in $G_1 - x_1y_1$, then there is an immersion of $K_d$ in $H$ using the immersion of $K_d$ in $G_2$ and replacing $x_2y_2$ by $P_1 + y_1y_2$, if necessary.

Similarly, if there is a path $P_2$ from $x_2$ to $y_2$ in $G_2 - x_2y_2$, then there is an immersion of $K_d$ in $H$ using the immersion of $K_d$ in $G_1$ and replacing $x_1y_1$ by $P_2 + y_1y_2$, if necessary.

If there is no $P_1$ in $G_1 - x_1y_1$ and no $P_2$ in $G_2 - x_2y_2$, then $x_1y_1$ is a cut-edge in $G_1$ and $x_2y_2$ is a cut-edge in $G_2$. Since these are cut-edges the Corner Separating Lemma tells us that the corners in the immersion of $K_d$ in $G_i$ are all on one side of the graph, i.e. the immersion does not use the edge $x_iy_i$ $(i \in \{1, 2\})$. Thus, the immersions of $K_d$ in $G_i$ are immersions of $K_d$ in $H$ $(i \in \{1, 2\})$.
\end{proof}

This lemma tells us that the class of non-$k$-colorable graphs obtained from applying operations $(\alpha)$ and $(\gamma)$, starting with a $K_{k + 1}$, satisfy Conjecture~\ref{main}. To prove Conjecture~\ref{main}, one would have to prove that immersions of $K_d$ are preserved by the operation $(\beta)$. Proving this seems quite complicated. Since $(\beta)$ is the identification of any two non-adjacent vertices, if two non-adjacent corners are identified we would need to show that another vertex in the new graph could become a corner. It seems that, while applying $(\alpha)$ and $(\gamma)$ result in a graph with a very similar immersion to the original graph (or graphs), applying $(\beta)$ could result in a very different immersion than that in the original graph.

Finally, we conclude with a conjecture. In \cite{DeD} the authors prove

\begin{thm}[\cite{DeD}] Every simple graph with minimum degree at least $200d$ contains a strong immersion of $K_d$.\end{thm}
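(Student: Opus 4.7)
The plan is a two-phase argument: first extract a highly edge-connected subgraph from $G$, then route edge-disjoint paths inside it to witness the strong immersion. Concretely, I would invoke a Mader-type density theorem to find a subgraph $H \subseteq G$ with edge-connectivity $\lambda(H)$ of order $\alpha d$ for some absolute constant $\alpha$; the classical statement is that every graph of sufficiently large minimum (or average) degree contains an $\alpha d$-edge-connected subgraph, so starting from minimum degree $200d$ one can reach an edge-connectivity of roughly $50d$, comfortably larger than the number $d-1$ of paths incident to any single corner. Note that inside $H$ the minimum degree remains at least $\alpha d$, which is more than enough to select corners.

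Inside $H$, I would choose any $d$ vertices $v_1, \dots, v_d$ as corners and then route the $\binom{d}{2}$ paths greedily, one pair at a time. At each step I need a $v_i v_j$ path that is edge-disjoint from those already routed and uses no $v_k$ with $k \neq i,j$ as an internal vertex. The edge-disjointness costs little edge-connectivity: after all of the routing, each corner is touched by at most $d-1$ used edges, which is negligible against $\lambda(H) \sim 50d$. The corner-avoidance condition is more subtle: one wants a $v_iv_j$ path in $H$ minus the used edges and with the other $d-2$ corners deleted internally. By a Menger-style estimate, deleting the $d-2$ non-endpoint corners drops the $v_iv_j$ edge-connectivity by at most a controlled amount (for instance, via a splitting-off argument around each $v_k$: a vertex of degree $\delta$ can absorb only $\lfloor \delta/2\rfloor$ units of $v_iv_j$ flow). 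As long as $\alpha$ is chosen large enough so that $\alpha d - O(d)$ is still positive after all such losses, the required path exists at every step.

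The main obstacle is the strong-immersion clause that corners must not appear as pegs. Merely having large edge-connectivity does not automatically deliver edge-disjoint paths that simultaneously avoid a prescribed internal vertex set, so a plain application of Menger's theorem is insufficient. I expect to handle this by a splitting-off reduction around each non-endpoint corner $v_k$ when routing a $v_iv_j$ path: as long as the edges incident to $v_k$ can be paired into $\lfloor \deg_H(v_k)/2 \rfloor$ disjoint transits that preserve the $v_iv_j$ edge-connectivity, the path can be rerouted to skip $v_k$ entirely. Carrying this out uniformly, across all $\binom{d}{2}$ path choices and all $d-2$ forbidden corners per path, is where the constant must be generous; the value $200$ then arises from the product of the Mader-reduction constant, the slack needed for repeated splitting-off, and the $O(d)$ accumulated losses from the $d-1$ paths per corner against $d-2$ forbidden corners per path.
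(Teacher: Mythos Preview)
First, note that the paper does not actually prove this theorem; it is quoted from \cite{DeD} as a result of DeVos, Dvo\v{r}\'ak, Fox, McDonald, Mohar, and Scheide, and no proof is given here, so there is no in-paper argument to compare against.

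Evaluating your sketch on its own: the first phase is sound---Mader's theorem does yield a subgraph $H$ of edge-connectivity $\Omega(d)$ from minimum degree $200d$. The gap is in the greedy routing. Your accounting for the connectivity loss is that ``each corner is touched by at most $d-1$ used edges,'' and you treat this as the only cost. But when you seek a $v_iv_j$ path in $H$ minus the already-used edges, the obstacle is an \emph{arbitrary} $v_iv_j$ edge cut in $H$, not merely the edges incident to corners. Previously routed paths can be long and can cross such a cut many times; after routing $\Theta(d^2)$ paths of uncontrolled length, a cut of size $\alpha d$ may be entirely exhausted. Nothing in your sketch bounds the number of used edges crossing a generic cut, so the existence of the next path is not established. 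Put differently, the quantity you are tracking (degree of corners in the ``used'' subgraph) is not the quantity that governs residual $v_iv_j$ connectivity.

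Your splitting-off idea for enforcing the strong-immersion condition is a reasonable ingredient in isolation, but it presupposes that the weak-immersion routing already succeeds, which is precisely the step that breaks down. To get a linear bound in $d$ one needs a routing mechanism that controls edge usage globally, not just at corners; naive one-pair-at-a-time routing against $\alpha d$-edge-connectivity does not suffice without such control.
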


Where a strong immersion means that corner vertices of the immersion are not also used as pegs in the immersion. This made us question how many vertices in a graph can have degree $md$, for $m$ any positive integer, and still have no immersion of $K_d$? If we construct the graph $M^{d - 2}_d$ where we add pods $P^{d - 2}_d$ to single vertices and we add enough pods to give the vertices in the dock degree at least $md$ for $m$  any positive integer. If $p$ is the number of pods added to each vertex and $b$ is the number of bays in the dock, then $$|V(M^{d - 2}_d)| = (d - 2)b(1 + (d + 1)p).$$ So, we can create graphs with $$\frac{|V(M^{d - 2}_d)|}{1 + p(d + 1)}$$ vertices of degree $md$ and no immersion of $K_d$. This has led us to make the following conjecture.

\begin{conj} Let $m$ be a positive integer, and $d \geq 8$ large compared to $m$. If a graph $G$ has minimum degree $d - 1$ and more than $\frac{|V(G)|}{1 + m(d + 1)}$ vertices have degree at least $md$, then $G$ has an immersion of $K_d$.\end{conj}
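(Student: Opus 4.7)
The natural plan is an extremal argument that mirrors the bay--dock structure underlying the examples $M^{d-2}_d$. Let $G$ be a minimum counterexample: a graph with minimum degree $d-1$, with $s := |S|$ vertices of degree at least $md$ satisfying $s > |V(G)|/(1+m(d+1))$, and no immersion of $K_d$. Write $n = |V(G)|$ and $T = V(G) \setminus S$. The aim is a contradiction obtained by peeling off pod-like pieces via the Corner Separating Lemma until the remaining core is forced to accommodate $d$ corners with the required edge-disjoint paths.

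First I would prove a reduction lemma. Suppose $G$ has an edge cut $C$ with $|C| \le d-2$ separating off a component $M$ whose vertices all lie in $T$. By the Corner Separating Lemma, every $K_d$ immersion of $G$ lives entirely in $G - V(M)$. Provided the vertices of $G - V(M)$ incident to $C$ retain degree at least $d-1$, deleting $V(M)$ produces a smaller counterexample: $n$ drops by $|V(M)| \ge 1$ without decreasing $s$, so the counting inequality only strengthens. Iterating, I would reduce to a graph in which every edge cut of size at most $d-2$ has high-degree vertices on both sides.

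The second step exploits the now-ubiquitous presence of $S$: a double count on edges between $S$ and $T$ should show that, after reduction, the subgraph $G[S]$ has average degree growing with $m$. Extracting via a Mader-type theorem a subgraph $H \subseteq G[S]$ of high edge-connectivity and minimum degree growing with $m$, and appealing to the bound $f(d) \le 200d$ of DeVos, Dvo\v{r}\'ak, Fox, McDonald, Mohar, and Scheide (which applies once $d$ is large compared to $m$, as the hypothesis permits), gives an immersion of $K_d$ inside $H$, contradicting the choice of $G$.

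The main obstacle lies in the reduction step. The Corner Separating Lemma trims components of $T$ cleanly only when no $d-1$ of their cut-edges concentrate at a single vertex of the remainder; a general counterexample can attach pods of every admissible size and even strand high-degree vertices inside otherwise pod-like pieces. Making the reduction honest will require a weighted accounting that tracks, cut by cut, how many low-degree neighbors each high-degree vertex may charge to itself, together with a proof that exceeding the ratio $1/(1 + m(d+1))$ forces some high-degree vertex to exceed its allowance. This bookkeeping is precisely where the structural techniques of this paper seem insufficient, which is presumably why the statement is posed as a conjecture.
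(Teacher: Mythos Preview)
The statement you are attempting is labeled a \emph{Conjecture} in the paper and is not proved there; it is posed as an open problem motivated by the extremal behavior of the constructions $M_d^{d-2}$. There is accordingly no paper proof to compare your attempt against, and you yourself concede in your final paragraph that the reduction step cannot be made honest with the tools available.

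Beyond the bookkeeping difficulty you flag, Step~2 contains a more basic mismatch of parameters. The hypothesis is that $d$ is large compared to $m$, so $m$ should be thought of as fixed (possibly $m=1$) while $d\to\infty$. Your plan is to extract a subgraph $H\subseteq G[S]$ whose minimum degree ``grows with $m$'' and then invoke the bound $f(d)\le 200d$ of DeVos, Dvo\v{r}\'ak, Fox, McDonald, Mohar, and Scheide. But that theorem requires minimum degree at least $200d$ to force a $K_d$ immersion; a quantity growing only with the fixed parameter $m$ is far below $200d$ once $d$ is large. Even the raw degrees of vertices in $S$ are only guaranteed to be $\ge md$, which for any $m<200$ is already insufficient, and passing to the induced subgraph $G[S]$ can only lose edges. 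The parenthetical ``which applies once $d$ is large compared to $m$'' has the dependence backwards: making $d$ large relative to $m$ makes the target $200d$ harder, not easier, to reach. So the appeal to $f(d)\le 200d$ cannot close the argument no matter how cleanly the reduction step is carried out; a genuinely new idea would be needed, which is precisely why the statement remains a conjecture.
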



\begin{thebibliography}{50}

\bibitem{A} T. Andreae, On self-immersions of infinite graphs, J. Graph Theory 58, 4, (2008), 275-285.

\bibitem{AL} F. N. Abu-Khzam and M. A. Langston, Graph Coloring and the Immersion Order, Lecture Notes in Computer Science 2697, (2003), 394-403.


\bibitem{C} P. A. Catlin,  A bound on the chromatic number of a graph, Discrete Math. 22, (1978), 81-83.

\bibitem{CF} M. Chudnovsky, A. Fradkin, and P. Seymour, Tournament immersion and cutwidth, J. Combin. Theory Ser. B 102, (2012), 93-101.

\bibitem{DeD} M. DeVos, Z. Dvo\v{r}\'ak, J. Fox, J. McDonald, B. Mohar,  and D. Scheide, Minimum degree condition forcing complete graph immersion, Preprint, (2011).

\bibitem{De} M. DeVos, K. Kawarabayashi, B. Mohar, and H. Okamura, Immersing small complete graphs, Ars Math. Contemp. 3, (2010), 139-146.

\bibitem{DM} M. DeVos, J. McDonald, B. Mohar, and D. Scheide, Immersing complete digraphs, European J. Combin. 33, (2012), 1294-1302.

\bibitem{D} G. A. Dirac, A property of 4-chromatic graphs and some remarks on critical graphs, J. London Math. Soc. 27, (1952), 85-92.

\bibitem{F} M. Ferrara, R. J. Gould, G. Tansey, and T. Whalen, On H-immersions, J. Graph Theory 57, 3, (2008), 245-254.

\bibitem{GK} P. A. Golovach, M. Kami\'nski, D. Paulusma, and D. M. Thilikos, Lift Contractions, Electron. Notes Discrete Math. 38, (2011), 407-412.

\bibitem{G} R. Govindan, S. Ramachandramurthi, A weak immersion relation on graphs and its applications, Discrete Math. 230, (2001), 189-206.

\bibitem{H} H. Hadwiger, \"Uber eine Klassifikation der Strekenkomplexe, Vierteljahrsschr. Naturforsch. Ges. Z\"urich 88, (1943), 133-142.

\bibitem{Ha} G. Haj\"os, \"Uber eine konstraktion nicht $n$-farbbarer graphen, Wiss. Z. Martin Luther Univ. Halle-Wittenberg Math. Naturwiss. Reihe, (1961), 10:116-117.

\bibitem{Ki} N. G. Kinnersley, Immersion order obstruction sets, Congr. Numer. 98, (1993), 113-123.

\bibitem{La} L. Lov\'asz, Combinatorial Problems and Exercises, North-Holland Publishing Company, 1979.

\bibitem{L} F. Lescure and H. Meyniel, On a problem upon configurations contained in graphs with given chromatic number, Graph theory in memory of G. A. Dirac (Sandberg, 1985), Ann. Discrete Math. 41, (1989), 325-331.

\bibitem{NW} C. St. J. A. Nash-Williams, On well-quasi-ordering infinite trees, Proc. Camb. Philos. Soc. 61, (1965), 697-720.

\bibitem{RS} N. Robertson and P. Seymour, Graph minors XXIII. Nash-Williams' immersion conjecture, J. Combin. Theory Ser. B, 100, (2010), 181-205.

\bibitem{RST} N. Robertson, P. D. Seymour and R. Thomas, Hadwiger's conjecture for $K_6$-free graphs, Combinatorica 13, (1993), 279-361.

\bibitem{W} K. Wagner, Beweis einer abschw\"achung der Hadwiger-vermutung, Math. Ann. 153, (1964), 139-141.

\bibitem{We} D. B. West, Introduction to Graph Theory, Prentice Hall, Upper Saddle River, NJ, 1996.


\end{thebibliography}
\end{document}